\documentclass[11pt]{article}
\bibliographystyle{amsplain}
\usepackage{amsmath,amsthm,amssymb,amsfonts,amscd}
\usepackage{amsxtra}
\usepackage{eucal}
\usepackage{mathrsfs}
\usepackage{color}
\usepackage{xcolor}
\usepackage{hyperref}
\usepackage{enumerate}
\usepackage{blkarray}
\usepackage{tikz}
\usepackage{enumitem}
\usepackage{tcolorbox}
\usepackage{algorithm}
\usepackage{algpseudocode}
\usepackage[a4paper,margin=2cm]{geometry}
\definecolor{forestgreen(traditional)}{rgb}{0.0, 0.27, 0.13}
\definecolor{forestgreen(web)}{rgb}{0.13, 0.55, 0.13}
\definecolor{airforceblue}{rgb}{0.36, 0.54, 0.66}

\newcommand\GF{\operatorname{GF}}

\newcommand\abs[1]{\lvert #1\rvert}

\newcommand{\rnk}{\mathrm{rank}}

\hypersetup{
  colorlinks,
  citecolor=forestgreen(web),
  linkcolor=airforceblue,
  urlcolor=magenta,
  pdftitle={Intertwining connectivities for vertex-minors and pivot-minors},
  pdfauthor={Duksang Lee and Sang-il Oum}}

\newtheorem{theorem}{Theorem}[section]
\newtheorem{lemma}[theorem]{Lemma}
\newtheorem{conjecture}[theorem]{Conjecture}

\newtheorem{corollary}[theorem]{Corollary}
\newtheorem{proposition}[theorem]{Proposition}

\theoremstyle{definition}

\theoremstyle{remark}

\usepackage{authblk}
\usepackage{lineno}
\title{Intertwining connectivities for vertex-minors and pivot-minors}
\author[2,1]{Duksang Lee\thanks{Supported by the Institute for Basic Science (IBS-R029-C1).}}
\author[$*$1,2]{Sang-il Oum}
\affil[1]{Discrete Mathematics Group,
Institute for Basic Science (IBS),
Daejeon,~South~Korea}
\affil[2]{Department of Mathematical Sciences, KAIST, Daejeon, South~Korea}
\affil[ ]{Email: \texttt{duksang@kaist.ac.kr}, \texttt{sangil@ibs.re.kr}}
\date{\today}	

\begin{document}
\maketitle
\begin{abstract}
We show that for pairs $(Q,R)$ and $(S,T)$ of disjoint subsets of vertices of a graph $G$, if $G$ is sufficiently large, then there exists a vertex $v$ in $V(G)-(Q\cup R\cup S\cup T)$ such that there are two ways to reduce $G$ by a vertex-minor operation that removes $v$ while preserving the connectivity between $Q$ and $R$ and the connectivity between $S$ and $T$.
Our theorem implies an analogous theorem of Chen and Whittle (2014) for matroids restricted to binary matroids.
\end{abstract}

\section{Introduction}
Oum~\cite{Oum2005} proved a vertex-minor analog of Tutte's Linking Theorem on matroids~\cite{Tutte1965}. Roughly speaking, the theorem of Oum says that for every pair of disjoint sets $Q$, $R$ of vertices of a graph $G$, there are at least two ways to reduce $G$ by a vertex-minor operation while keeping the `connectivity' between $Q$ and~$R$, where this connectivity will be defined using the rank function of matrices.
We prove that if the graph is large, for any two pairs $(Q,R)$ and $(S,T)$ of disjoint sets of vertices, there exist two ways to reduce the graph by a vertex-minor operation while preserving the connectivity between $Q$ and $R$, and the connectivity between $S$ and $T$.

To state the main theorem precisely,
we introduce a few concepts. A graph is \emph{simple} if it has neither loops nor parallel edges. In this paper, all graphs are finite and simple. For a vertex $v$ of a graph~$G$, the \emph{local complementation} at $v$ is an operation that, for each pair $x$, $y$ of distinct neighbors of $v$, adds an edge $xy$ if $x$ and $y$ are non-adjacent in $G$ and removes an edge $xy$ otherwise. Let $G*v$ be the graph obtained from $G$ by applying the local complementation at $v$. A graph $H$ is a \emph{vertex-minor} of $G$ if it can be obtained from $G$ by applying a sequence of local complementations and deletions of vertices. For an edge $uv$ of a graph $G$, let $G\wedge uv=G*u*v*u$. We remark that the pivoting operation is well defined since $G*u*v*u=G*v*u*v$. The operation obtaining $G\wedge uv$ from $G$ is called \emph{pivoting} $uv$.  A graph $H$ is a \emph{pivot-minor} of $G$ if it can be obtained from $G$ by applying a sequence of pivoting edges and deleting vertices.

For a graph $G$, the \emph{cut-rank} function $\rho_{G}$ is a function that maps a set $X$ of vertices of $G$ to the rank of an $X\times (V(G)-X)$ matrix\footnote{
  For two sets $A$ and $B$, an $A\times B$-matrix denotes an $\abs{A}\times \abs{B}$ matrix whose rows and columns are indexed by the elements of $A$ and $B$ respectively. 
}
over $\GF(2)$ whose $(i,j)$-entry is $1$ if $i$ and $j$ are adjacent and $0$ otherwise.
For disjoint sets $S$, $T$ of vertices of $G$, the \emph{connectivity between $S$ and $T$}, denoted by $\kappa_{G}(S,T)$, is defined by \[\min_{S\subseteq X\subseteq V(G)-T}\rho_{G}(X).\] 

Now we are ready to state the analog of Tutte's Linking Theorem for vertex-minors as reformulated by Geelen, Kwon, McCarty, and Wollan~{\cite[Theorem 4.1]{Geelen2020}}.
\begin{theorem}[Oum~\cite{Oum2005}]
\label{thm:oum_linking}
Let $G$ be a graph and $Q$, $R$ be disjoint subsets of $V(G)$. Let $\kappa_{G}(Q,R)=k$ and $F=V(G)-(Q\cup R)$. For each vertex $v$ of $F$, at least two of the following hold:
\begin{enumerate}[label=\rm(\roman*)]
\item $\kappa_{G\setminus v}(Q,R)=k$.
\item $\kappa_{G*v\setminus v}(Q,R)=k$.
\item $\kappa_{G\wedge uv\setminus v}(Q,R)=k$ for each neighbor $u$ of $v$.
\end{enumerate}
\end{theorem}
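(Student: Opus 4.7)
My plan is to proceed by contradiction, translating the conclusions into rank statements about submatrices of the adjacency matrix $A_G$ over $\GF(2)$. First, I would observe that local complementation and pivoting preserve $\rho_G$ on every subset of $V(G)$, so the three operations $G\setminus v$, $G * v \setminus v$, and $G \wedge uv \setminus v$ correspond to three distinct ways of removing the column indexed by $v$ from the submatrix $A_G[X, V(G) \setminus X]$, possibly after a rank-preserving row/column transformation. Since removing one column changes the rank by at most one, each of the three modified cut-ranks takes values in $\{k-1, k\}$; in particular, a failure of any one of (i)--(iii) amounts to the existence of a $\rho_G$-minimizer $X$ (with $Q \subseteq X \subseteq V(G) \setminus (R \cup \{v\})$) at which the corresponding rank strictly drops by one.

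Next, I would establish a local algebraic claim: at any single such $\rho_G$-minimizer $X$ and any neighbor $u$ of $v$, at most one of $\rho_{G\setminus v}(X)$, $\rho_{G*v\setminus v}(X)$, $\rho_{G\wedge uv\setminus v}(X)$ equals $k - 1$. Writing $M' := A_G[X, V(G)\setminus X \setminus v]$ and $m_v := A_G[X, v]$, a direct $\GF(2)$-linear computation expresses each of the three drops as a condition on $m_v$ and the row vector $A_G[v, V(G)\setminus X \setminus v]$ relative to the column span and row span of $M'$. The key ingredient is the symmetry $\rho_G(X) = \rho_G(V(G)\setminus X)$: if two of the three rank-drop conditions held simultaneously at $X$, then $\rho_G(V(G)\setminus X \setminus v)$ would fall below $k$, contradicting the lower bound $\kappa_G(R, Q) = \kappa_G(Q, R) = k$ on the cut-rank of sets containing $R$ and avoiding $Q$.

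To lift the local claim to the global statement of the theorem, I would argue that if, say, (i) and (ii) globally fail with respective witnesses $X_1, X_2$, then by submodularity of each operation-specific cut-rank function on the lattice of $\rho_G$-minimizers one can produce a single minimizer $X$ at which both failures occur, whereupon the local claim gives a contradiction. The other two pairs of failures would be handled symmetrically. The principal obstacle is precisely this uncrossing argument: the sublattices of witnesses for (i), (ii), and (iii) can a priori be disjoint within the minimizer lattice, so I expect to have to choose inclusion-extremal witnesses carefully and track how the specific failure conditions transfer under intersection and union, with an additional subcase analysis for (iii) depending on whether the designated neighbor $u$ lies in $X$ or in $V(G)\setminus X\setminus\{v\}$.
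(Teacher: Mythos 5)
This theorem is not proved in the paper; it is quoted from Oum~\cite{Oum2005}, so the comparison is with the argument there (see also~\cite{Oum2020}). Your local claim is correct: for a fixed $(Q,R\cup\{v\})$-separating set $X$ with $\rho_G(X)=k$, at most one of $\rho_{G\setminus v}(X)$, $\rho_{G*v\setminus v}(X)$, $\rho_{G\wedge uv\setminus v}(X)$ can equal $k-1$ (and the choice of the neighbor $u$ is immaterial since the graphs $G\wedge uv\setminus v$ are locally equivalent for all $u\in N_G(v)$). But the place where you yourself flag difficulty is a genuine gap, and it is the crux of the theorem. The witnesses of the failures of two different operations live in two \emph{different} sublattices of the lattice of $(Q,R\cup\{v\})$-separating sets of order $k$: by submodularity each family of witnesses is closed under intersection and union with itself, but nothing forces the two families to meet, and choosing inclusion-extremal witnesses does not transfer a rank drop for $G\setminus v$ at $X_1$ into a rank drop at $X_2$ (the functions $\rho_{G\setminus v}$ and $\rho_{G*v\setminus v}$ are each submodular, but there is no joint submodularity on a common lattice that you can invoke). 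As described, the uncrossing step cannot be completed, so the reduction of the global statement to your local claim does not go through.

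The known proof avoids any uncrossing by proving a \emph{mixed two-set} inequality: for any two distinct graphs $H_1,H_2\in\{G\setminus v,\ G*v\setminus v,\ G\wedge uv\setminus v\}$ and any $X,Y\subseteq V(G)-\{v\}$,
\[
\rho_{H_1}(X)+\rho_{H_2}(Y)\ \ge\ \rho_G(X\cap Y)+\rho_G(X\cup Y\cup\{v\})-1 .
\]
With this, if two of (i)--(iii) failed with (possibly different) witnesses $X$ and $Y$ satisfying $Q\subseteq X,Y\subseteq V(G)-(R\cup\{v\})$, then $X\cap Y$ and $X\cup Y\cup\{v\}$ are both $(Q,R)$-separating in $G$, giving $(k-1)+(k-1)\ge k+k-1$, a contradiction; your local claim is exactly the degenerate case $X=Y$ of this inequality, which is why it is true but insufficient. (The single-operation analogues in the present paper, \ref{item:s1} and \ref{item:s2} of Lemma~\ref{lem:subtool}, mix $G$ with one of the reduced graphs; what your plan is missing is the version mixing two different reduced graphs.) Your linear-algebra computation at a single $X$ could plausibly be upgraded to prove this two-variable inequality over $\GF(2)$, but as written the proposal does not contain that step, and without it the proof does not close.
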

Theorem~\ref{thm:oum_linking} is about preserving the rank-connectivity of one pair of vertex sets while taking vertex-minors. Here is our main theorem which considers two pairs of vertex sets.

\begin{theorem}
\label{thm:main}
Let $G$ be a graph and $Q$, $R$, $S$, and $T$ be subsets of $V(G)$ such that $Q\cap R=S\cap T=\emptyset$. Let $\kappa_{G}(Q,R)=k$, $\kappa_{G}(S,T)=\ell$, and $F=V(G)-(Q\cup R\cup S\cup T)$. If $|F|\geq (2\ell+1)2^{2k}$, then there exists a vertex $v$ in $F$ such that at least two of the following hold:
\begin{enumerate}[label=\rm(\roman*)]
\item\label{item:m1} $\kappa_{G\setminus v}(Q,R)=k$ and $\kappa_{G\setminus v}(S,T)=\ell$.
\item\label{item:m2}  $\kappa_{G*v\setminus v}(Q,R)=k$ and $\kappa_{G*v\setminus v}(S,T)=\ell$.
\item\label{item:m3}  $\kappa_{G\wedge uv\setminus v}(Q,R)=k$ and $\kappa_{G\wedge uv\setminus v}(S,T)=\ell$ for each neighbor $u$ of $v$.
\end{enumerate}
\end{theorem}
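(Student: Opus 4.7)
The approach is by contradiction: assume no $v\in F$ satisfies the conclusion, and show $|F|<(2\ell+1)2^{2k}$. For each $v\in F$, let $\alpha(v)\subseteq\{1,2,3\}$ be the set of indices $i$ for which option (i), (ii), or (iii) of Theorem~\ref{thm:oum_linking} preserves $\kappa(Q,R)$ at the value $k$, and define $\beta(v)$ analogously for $\kappa(S,T)$ at the value $\ell$. Theorem~\ref{thm:oum_linking} gives $|\alpha(v)|,|\beta(v)|\geq 2$, while the failure of the current conclusion at $v$ says $|\alpha(v)\cap\beta(v)|\leq 1$. Together these force $\alpha(v)$ and $\beta(v)$ to be distinct $2$-element subsets of $\{1,2,3\}$, so $v$ acquires a label $(a(v),b(v))\in\{1,2,3\}^2$ with $a(v)\neq b(v)$, where $a(v)$ and $b(v)$ are the unique indices outside $\alpha(v)$ and $\beta(v)$. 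Pigeonhole over the six possible labels yields a subset $F'\subseteq F$ of size at least $|F|/6$ on which the label is a fixed pair $(a,b)$.

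Next, we translate the label $(a,b)$ into a structural constraint on minimum cuts. For $a=1$, the condition $\kappa_{G\setminus v}(Q,R)=k-1$ (the only other possibility, since deleting a vertex drops $\rho_G$ by at most one column) implies that the column of $v$ in the cut matrix of some minimum $(Q,R)$-cut $X$ of $G$ (that is, $Q\subseteq X\subseteq V(G)-R$ with $\rho_G(X)=k$) is linearly independent of the other columns. The cases $a=2$ and $a=3$ give the analogous statement in $G*v$ and in $G\wedge uv$ for some neighbor $u$ of $v$; since $\rho_{G*v}=\rho_G$ and $G\wedge uv=G*u*v*u$, these can be normalized to the deletion case by a short sequence of local complementations on an auxiliary graph. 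The same reduction applies on the $(S,T)$-side via $b$. After normalization, every $v\in F'$ certifies a uniform structural failure for both pairs.

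The central task is to bound $|F'|$, and this is where I expect the main difficulty. The factor $2^{2k}$ should arise from classifying each $v\in F'$ by a pair of short $\GF(2)$-vectors recording how $v$'s column interacts with fixed bases of a canonical minimum $(Q,R)$-cut matrix and an auxiliary cut matrix relevant to the failure mode; there are at most $2^k\cdot 2^k=2^{2k}$ such patterns, and two vertices with the same pattern are essentially interchangeable on the $(Q,R)$-side. The factor $2\ell+1$ plausibly comes from an uncrossing step using submodularity of $\rho_G$: within a pattern class, one can build a chain of nested minimum $(S,T)$-cuts that can separate at most $2\ell+1$ vertices before two of them become indistinguishable, yielding a genuinely good $v$ in $F'$. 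The main obstacle is carrying out this uncrossing cleanly in the presence of the universal quantifier in case~(iii), where the neighbor $u$ witnessing failure may differ between $(Q,R)$ and $(S,T)$; the identity $G\wedge uv=G*u*v*u$ together with the invariance of $\rho_G$ under local complementation should suffice to reduce this to the cleaner deletion and local-complementation cases, but the bookkeeping across two simultaneous connectivities looks delicate and is likely where the bulk of the technical work will live.
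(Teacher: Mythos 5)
There is a genuine gap: after the correct but easy first step (under the contradiction hypothesis each $v\in F$ has exactly two good operations for $(Q,R)$ and exactly two for $(S,T)$, and these $2$-sets differ, giving six possible labels), your proof stops being a proof. The entire quantitative content of the theorem --- why $(2\ell+1)2^{2k}$ vertices force a good vertex --- is left at the level of ``should arise'', ``plausibly comes from'', and ``likely where the bulk of the technical work will live''. In particular the two load-bearing claims are unsubstantiated: that vertices sharing a $\GF(2)$ ``pattern'' with respect to a fixed minimum-cut basis are ``essentially interchangeable'' (the separating sets witnessing failure vary with $v$, so a fixed basis does not obviously control them --- this is precisely the difficulty), and that an uncrossing chain of minimum $(S,T)$-cuts can ``separate at most $2\ell+1$ vertices''. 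Neither is proved, no lemma is stated that would make them precise, and it is not clear they can be made to work as described. As written, the proposal is a plan, not a proof.

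For comparison, the paper's argument uses a different mechanism, and the roles of the two factors are essentially opposite to your guesses. First, $S$ and $T$ are shrunk to size $\ell$ via Lemma~\ref{lem:base} and Corollary~\ref{cor:One} (with Lemma~\ref{lem:perm} transferring the conclusion back to $G$). Then Lemma~\ref{lem:seqsep} produces, from the non-flexible vertices of $F$, a single \emph{nested} chain of $(Q,R)$-separating sets of order $k$ picking up the vertices of $F$ one at a time; splitting this chain into $2\ell+1$ consecutive windows of length $2^{2k}$ and using $|S\cup T|\le 2\ell$ yields a window disjoint from $S\cup T$ --- that is where $2\ell+1$ comes from. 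Inside that window one runs a potential argument (Lemma~\ref{lem:nesting} inside Proposition~\ref{prop:ST_small}): each round either finds the desired vertex or enlarges $Q$ and $R$ so that $\tilde\sqcap_{G}[Q',R']$ increases by $\tfrac12$ while the region between them shrinks by at most a factor of $2$; since $\tilde\sqcap_{G}[Q',R']\le\rho_{G}(Q')=k$, only about $2k+1$ rounds are possible, which is where $2^{2k}$ comes from. To complete your approach you would need, at minimum, analogues of Lemma~\ref{lem:seqsep} (a canonical nested family of failure witnesses) and of the potential/halving step in Lemma~\ref{lem:nesting}; the six-way pigeonhole at the start does not by itself advance toward either.
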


Since at least two of \ref{item:m1}, \ref{item:m2}, and \ref{item:m3} hold, we deduce that \ref{item:m1} or \ref{item:m3} holds. Thus, we have the following corollary for pivot-minors.

\begin{corollary}
\label{cor:main}
Let $G$ be a graph and $Q$, $R$, $S$, and $T$ be subsets of $V(G)$ such that $Q\cap R=S\cap T=\emptyset$. Let $\kappa_{G}(Q,R)=k$, $\kappa_{G}(S,T)=\ell$, and $F=V(G)-(Q\cup R\cup S\cup T)$. If $|F|\geq (2\ell+1)2^{2k}$, then there exists a vertex $v$ in $F$ such that at least one of the following holds:
\begin{enumerate}[label=\rm(\roman*)]
\item\label{item:cormain1} $\kappa_{G\setminus v}(Q,R)=k$ and $\kappa_{G\setminus v}(S,T)=\ell$.
\item\label{item:cormain2} $\kappa_{G\wedge uv\setminus v}(Q,R)=k$ and $\kappa_{G\wedge uv\setminus v}(S,T)=\ell$ for each neighbor $u$ of $v$.
\end{enumerate}
\end{corollary}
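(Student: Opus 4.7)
The plan is to combine Theorem~\ref{thm:oum_linking} with a pigeonhole argument on ``types'' of vertices of $F$ relative to the $(Q,R)$-cut structure. The strategy is to locate a single vertex $v \in F$ at which all three local operations preserve $\kappa(Q,R)=k$; once such a $v$ is found, Theorem~\ref{thm:oum_linking} applied to the pair $(S,T)$ at $v$ automatically yields two of the three operations that also preserve $\kappa(S,T)=\ell$, and these two operations witness the conclusion of Theorem~\ref{thm:main}.

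To set this up, I would fix a minimum $(Q,R)$-cut $X_0$ with $\rho_G(X_0)=k$ and assign each $v\in F$ a type recording the side of $X_0$ containing $v$ together with the row (resp.\ column) of $v$ in the $X_0\times (V(G)\setminus X_0)$ cut matrix over $\GF(2)$, projected onto a fixed $k$-dimensional basis of each side. The number of types is at most $2^{2k}$, so the hypothesis $|F|\geq (2\ell+1)\,2^{2k}$ yields, by pigeonhole, a monochromatic class $F_0 \subseteq F$ with $|F_0|\geq 2\ell+1$, all of whose vertices lie on the same side of $X_0$. Same-type vertices share identical rows in the cut matrix, so deleting any single element of $F_0$ leaves the rank of $X_0$ unchanged, which provides the redundancy needed for the $X_0$-cut rank to survive all three operations performed at any vertex of $F_0$.

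The factor $2\ell+1$ is used to close the gap between ``cut rank at $X_0$ is preserved'' and ``$\kappa(Q,R)=k$ is preserved'': an operation at $v\in F_0$ could, in principle, create a cut $X_1 \neq X_0$ that was not tight in $G$ but becomes a witness to $\kappa(Q,R)<k$ in the reduced graph. I expect to show, via a submodularity/chain argument on the lattice of $(S,T)$-minimum cuts, that the set of $v\in F_0$ at which some local operation can unmask such a new tight $(Q,R)$-cut has total size at most $2\ell$ across the three operations. Since $|F_0|\geq 2\ell+1$, at least one $v\in F_0$ escapes, so all three operations at $v$ preserve $\kappa(Q,R)=k$, and the argument concludes as in the first paragraph via Theorem~\ref{thm:oum_linking} applied to $(S,T)$.

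The main obstacle is this last counting step: bounding, inside a single type class, the number of vertices whose local operation exposes a new tight $(Q,R)$-cut. Same-type with respect to the single cut $X_0$ rigidifies behavior only at $X_0$, while $\kappa(Q,R)$ demands control over the entire lattice of $(Q,R)$-cuts. The coupling between the $(Q,R)$- and $(S,T)$-cut lattices — specifically, the expectation that unmasking a new tight $(Q,R)$-cut should force a rank-$\ell$ witness on the $(S,T)$-side, which in turn limits the number of offenders by $\ell$ per operation — is where the factor $2\ell+1$ and the technical novelty of the proof beyond Theorem~\ref{thm:oum_linking} are expected to arise.
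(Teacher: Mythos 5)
Your overall plan hinges on finding a vertex $v\in F$ at which \emph{all three} local operations preserve $\kappa(Q,R)=k$ (a $(Q,R)$-flexible vertex, in the paper's terminology), and then invoking Theorem~\ref{thm:oum_linking} for $(S,T)$ at that $v$. That second step is sound, but the first is false in general, no matter how large $F$ is: consider $G$ an induced path $q,f_1,\dots,f_n,r$ with $Q=\{q\}$, $R=\{r\}$, $S=T=\emptyset$, so $k=1$, $\ell=0$. For every internal vertex $f_i$ we have $\kappa_{G\setminus f_i}(Q,R)=0<1$, so no vertex of $F$ is $(Q,R)$-flexible, yet the hypothesis $|F|\ge(2\ell+1)2^{2k}$ is satisfied for all $n\ge 4$ and the corollary is true (pivoting works). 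No pigeonhole over $2^{2k}$ ``types'' defined from a single minimum cut $X_0$ can repair this, because the obstruction is not the rank of $X_0$ but the whole lattice of tight $(Q,R)$-separations: every vertex of $F$ can lie on some tight separation, and then some operation at it drops $\kappa(Q,R)$. Your final counting step (at most $2\ell$ ``offenders'' per type class) is also unsupported; in the path example with $\ell=0$ it would claim zero offenders, i.e.\ a flexible vertex, which does not exist.

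For comparison, the paper does not look for a $(Q,R)$-flexible vertex at all. The corollary is a one-line consequence of Theorem~\ref{thm:main} (if two of (i)--(iii) hold, one of (i),(iii) holds), and Theorem~\ref{thm:main} is proved by assuming every vertex of $F$ is non-flexible for both pairs, extracting a nested chain of tight $(Q,R)$-separating sets indexed by the vertices of $F$ (Lemma~\ref{lem:seqsep}), using the $2\ell+1$ factor only to find a block of $2^{2k}$ consecutive chain members avoiding $S\cup T$, and then running a potential-function induction (Lemma~\ref{lem:nesting}, Proposition~\ref{prop:ST_small}) in which either the desired vertex appears or $Q,R$ can be enlarged so that $\tilde\sqcap_G[Q,R]$ increases by $\tfrac12$ while at least half the free vertices survive; since $\tilde\sqcap_G[Q,R]\le k$, this terminates in at most $2k+2$ rounds. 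Some mechanism of this kind, handling the case where no vertex is $(Q,R)$-flexible, is exactly what your proposal is missing.
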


Our proof is inspired by the proof of the following theorem of Chen and Whittle~\cite{Chen2014} who proved the analog for matroids, which was conjectured by Geelen, and proved for representable matroids by Huynh and van Zwam~\cite{Tony2014}.

\begin{theorem}[Chen and Whittle~\cite{Chen2014}]\label{thm:matroid}
Let $M$ be a matroid and $Q$, $R$, $S$, and $T$ be subsets of $E(M)$ such that $Q\cap R=S\cap T=\emptyset$. Let $\kappa_{G}(Q,R)=k$, $\kappa_{G}(S,T)=\ell$, and $F=E(M)-(Q\cup R\cup S\cup T)$. If $|F|\geq(2\ell+1)2^{2k+1}$, then there exists an element $e$ of $E(M)$ such that at least one of the following holds:
\begin{enumerate}[label=\rm(\roman*)]
\item\label{item:matroid1} $\kappa_{M\setminus e}(Q,R)=k$ and $\kappa_{M\setminus e}(S,T)=\ell$.
\item\label{item:matroid2} $\kappa_{M/e}(Q,R)=k$ and $\kappa_{M/e}(S,T)=\ell$.
\end{enumerate}
\end{theorem}

In fact, Corollary~\ref{cor:main} implies Theorem~\ref{thm:matroid} restricted to binary matroids by using a relation between pivot-minors of bipartite graphs and minors of matroids~\cite{Oum2005}.
One of the key differences between our proof and the proof of Chen and Whittle is that we use a new way of measuring the local connectivity, $\tilde\sqcap(S,T)=\frac{1}{2}(\rho_{G}(S)+\rho_{G}(T)-\rho_{G}(S\cup T))$. The purpose of having $\frac{1}{2}$ in the previous definition is to ensure that $\tilde\sqcap_{G}[S,V(G)-S]=\rho_{G}(S)$. 

Our theorem is motivated by the following conjecture for pivot-minors. A pivot-minor $H$ of a graph $G$ is \emph{proper} if $|V(H)|<|V(G)|$. A graph $G$ is an \emph{intertwine} of graphs $H_{1}$ and $H_{2}$ for pivot-minors if it contains both $H_{1}$ and $H_{2}$ as pivot-minors and no proper pivot-minor of $G$ contains both $H_{1}$ and $H_{2}$ as pivot-minors.

\begin{conjecture}[Intertwining conjecture for pivot-minors]
\label{conj:inter}
For graphs $G_{1}$ and $G_{2}$, there are only finitely many intertwines of $G_{1}$ and $G_{2}$ for pivot-minors.
\end{conjecture}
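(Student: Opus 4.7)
The plan is to attempt Conjecture~\ref{conj:inter} via the Chen--Whittle template: bound the size of any intertwine $G$ of $G_1$ and $G_2$ through iterated applications of Corollary~\ref{cor:main}, then invoke minimality to derive a contradiction. Set $n_i=|V(G_i)|$. Since each $G_i$ is a pivot-minor of $G$, fix vertex subsets $X_i\subseteq V(G)$ of size $n_i$ realizing $G_i$: some sequence of pivots (which we may push to the front) followed by deletions turns $G$ into a graph whose restriction to $X_i$ equals $G_i$. The goal then becomes showing $|F|\le f(n_1,n_2)$ for some function $f$, where $F=V(G)-(X_1\cup X_2)$, since bounding $|V(G)|$ by a function of $n_1,n_2$ gives finiteness of intertwines up to isomorphism.

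The strategy is to produce a vertex $v\in F$ for which either $G\setminus v$ or $G\wedge uv\setminus v$ still has both $G_1$ and $G_2$ as pivot-minors, which would contradict minimality. To detect such $v$ through cut-rank connectivity, we encode ``preserving $G_i$'' as preservation of the cut-rank values $\rho_G(Y)=\kappa_G(Y,X_i-Y)$ for every $Y\subseteq X_i$; this is justified by local complementation preserving cut-rank together with the Bouchet--Oum characterization that two simple graphs on a common vertex set with equal cut-rank functions are locally equivalent. Label these at most $2^{n_1}+2^{n_2}$ pairs as connectivity constraints and pair them off, one from each family. Apply Corollary~\ref{cor:main} repeatedly: each application preserves one such pair at the cost of removing a vertex from $F$, provided $|F|$ currently satisfies the bound $(2\ell+1)2^{2k}$ for the relevant $k,\ell\le n_i$. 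If $|F|$ exceeds an iterated exponential in $n_1+n_2$, every reduction can be carried out, yielding a proper pivot-minor of $G$ that still contains both $G_1$ and $G_2$ and contradicting the intertwine property.

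The main obstacle is the pair-by-pair nature of Corollary~\ref{cor:main}: preserving the entire pivot-minor structure on $X_i$ in a single step appears to require simultaneous control of exponentially many cuts, whereas the tool handles only two at a time. A direct resolution would be a multi-pair strengthening of Theorem~\ref{thm:main} that handles $m$ pairs $(Q_j,R_j)_{j=1}^{m}$ with a size bound depending on $m$ and the individual connectivities $k_j$, likely proved by an induction that generalizes the new measure $\tilde{\sqcap}$ to track many constraints at once. An alternative is to pass to a linked branch-decomposition of $G[X_i]$ and argue that only polynomially many cuts are genuinely independent. A secondary subtlety is verifying that preserving all cut-ranks on $X_1\cup X_2$ preserves the labelled pivot-minors $G_1,G_2$ themselves rather than only their cut-rank profiles: since pivot-equivalence is stricter than local equivalence, bridging the two requires Oum's theory of locally equivalent graphs together with careful interleaving of the reductions in $F$ with the pivot sequence outside $X_i$. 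These are precisely the obstacles that keep the statement a conjecture rather than a direct corollary of Theorem~\ref{thm:main}.
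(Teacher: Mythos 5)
This statement is not proved in the paper at all: it is stated as an open conjecture, and the paper's own discussion runs in the \emph{opposite} logical direction --- Conjecture~\ref{conj:inter}, together with Theorem~\ref{thm:oum_linking}, would imply a qualitative version of Corollary~\ref{cor:main}, not the other way around. So any attempt to deduce the conjecture from Theorem~\ref{thm:main} or Corollary~\ref{cor:main} is trying to invert an implication, and your own write-up concedes as much: it is a research plan that ends by naming the obstacles, not a proof.

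Beyond that structural point, the specific gaps you flag are genuine and, as far as the paper is concerned, unresolved. First, encoding ``$G$ contains $G_i$ as a pivot-minor'' by the family of connectivities $\kappa_G(Y,X_i-Y)$ for $Y\subseteq X_i$ is not sound: preserving these numbers in $G$ does not by itself produce a pivot-minor on $X_i$ attaining all of them simultaneously (Theorem~\ref{thm:oum_linking} and Corollary~\ref{cor:main} realize the minimum for one, respectively two, pairs at a time, and no simultaneous linking statement for exponentially many pairs is available); and even if a reduction to $X_i$ with the correct cut-rank function existed, Bouchet's theorem would only give local equivalence to $G_i$, whereas pivot-minor containment requires pivot equivalence, which is strictly finer. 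Second, the iteration you propose re-uses Corollary~\ref{cor:main} once per constraint pair, but each application only guarantees a vertex good for the \emph{two} pairs fed into it; there is no mechanism forcing a single vertex (or a sequence of deletions) to respect all previously handled pairs, which is exactly the missing ``multi-pair'' strengthening you mention. Since that strengthening is precisely what is not known, the proposal does not close the conjecture, and no comparison with a paper proof is possible because the paper offers none.
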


Together with Theorem~\ref{thm:oum_linking}, Conjecture~\ref{conj:inter} implies Corollary~\ref{cor:main} without an explicit function. 
Suppose that $G$ is a graph and $Q$, $R$, $S$, and $T$ are subsets of $V(G)$ such that $Q\cap R=S\cap T=\emptyset$, $\kappa_{G}(Q,R)=k$, and $\kappa_{G}(S,T)=\ell$.
By Theorem~\ref{thm:oum_linking}, $G$ has pivot-minors $G_{1}$ and $G_{2}$ such that $V(G_{1})=Q\cup R$, $V(G_{2})=S\cup T$, $\rho_{G_{1}}(Q)=k$, and $\rho_{G_{2}}(S)=\ell$. If Conjecture~\ref{conj:inter} holds, then there exists an integer $n$ such that every intertwine of $G_{1}$ and $G_{2}$ for pivot-minors has at most $n$ vertices. If $|V(G)|>n$, then $G$ is not an intertwine of $G_{1}$ and $G_{2}$ for pivot-minors. Hence, there exists a proper pivot-minor $H$ of $G$ having both $G_{1}$ and $G_{2}$ as pivot-minors. Let $v$ be a vertex in $V(G)-V(H)$. Then it is easy to see that ~\ref{item:cormain1} or~\ref{item:cormain2} of Corollary~\ref{cor:main} holds.

The following conjecture of Oum~\cite{Oum2016} implies the intertwining conjecture for pivot-minors.

\begin{conjecture}[Well-quasi-ordering conjecture for pivot-minors]
\label{conj:wqo}
For every infinite sequence $G_{1}$, $G_{2}$, $\ldots$ of graphs, there exist $i<j$ such that $G_{i}$ is isomorphic to a pivot-minor of $G_{j}$.
\end{conjecture}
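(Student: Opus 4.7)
The plan is to adapt the Robertson--Seymour strategy for the Graph Minor Theorem to the pivot-minor setting, with rank-width playing the role of tree-width. The starting point is that the bipartite case of Conjecture~\ref{conj:wqo} already follows from known results: pivot-minors of bipartite graphs correspond bijectively to minors of binary matroids, and the Geelen--Gerards--Whittle well-quasi-ordering theorem for matroids representable over a fixed finite field applies. The task is then to extend this to all simple graphs, where there is no direct matroidal counterpart.

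The first step would be a dichotomy: for an infinite sequence $G_{1},G_{2},\ldots$ of graphs, after passing to a subsequence either the rank-widths $\mathrm{rw}(G_{i})$ are uniformly bounded, or they tend to infinity. In the bounded case, I would invoke Oum's theorem that graphs of bounded rank-width are well-quasi-ordered by vertex-minors, and strengthen that argument from vertex-minors to pivot-minors. Such a strengthening is plausible because a rank-decomposition admits a canonical labeling of its branches by bounded-size algebraic gadgets, and pivoting, like local complementation, acts cleanly on these labels. A Higman--Kruskal style labeled-tree argument should then go through, provided the label alphabet is itself well-quasi-ordered under pivot-minor containment.

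The hard part is the unbounded rank-width case. Here one would want an excluded-pivot-minor structure theorem: graphs excluding a fixed $H$ as a pivot-minor should admit a tree-like decomposition into pieces of bounded rank-width, up to some controlled exceptional structure. Using such a theorem together with Theorem~\ref{thm:oum_linking} and the local-connectivity tools developed in this paper, one could hope to exhibit, for each sufficiently large $G_{j}$ in the sequence, an earlier $G_{i}$ as a pivot-minor by locating a $G_{i}$-structured region inside the decomposition and iteratively applying the linking theorem to reduce $G_{j}$ to $G_{i}$ while preserving the embedding.

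The main obstacle, and the reason Conjecture~\ref{conj:wqo} remains open, is that no such excluded-pivot-minor structure theorem is currently known. Even the weaker statement that excluding a fixed vertex-minor bounds rank-width is established only for a very short list of graphs, such as paths, cycles, and fans. A realistic intermediate milestone would therefore be to first settle the vertex-minor analog of Conjecture~\ref{conj:wqo}, perhaps via a further development of the program on prime graphs and canonical split decompositions, and then bootstrap to the pivot-minor case by analyzing local-equivalence classes of the pieces produced by such a decomposition. Without a breakthrough on the structural side, the argument above remains conditional at exactly this step.
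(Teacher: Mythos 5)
This statement is Conjecture~\ref{conj:wqo}: the paper does not prove it, and it is, as the paper says, an open problem (attributed to Oum). So there is no proof in the paper to compare against, and what you have written is not a proof either --- it is a research programme, and you say so yourself in the final paragraph. The decisive gap is exactly the one you name: the unbounded rank-width case requires some structure theorem for graphs excluding a fixed pivot-minor (or some other handle on sequences whose rank-widths tend to infinity), and no such theorem exists. An argument that is ``conditional at exactly this step'' establishes nothing, so the proposal cannot be accepted as a proof of the conjecture, nor would the paper expect one.

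Two smaller points on the unconditional parts of your sketch. First, the bounded rank-width case is not something you actually carry out: you assert that Oum's well-quasi-ordering theorem for vertex-minors of bounded rank-width graphs ``should'' strengthen to pivot-minors via a Higman--Kruskal argument on labelled trees, but you give no definition of the label alphabet, no verification that it is well-quasi-ordered, and no argument that pivoting ``acts cleanly'' on the labels; as written this is a plausibility claim, not a lemma. (This particular case is in fact known from Oum's later work on rank-width of skew-symmetric or symmetric matrices, so it could be cited rather than re-derived, but citing it still leaves the main gap untouched.) Second, the bipartite case via binary matroids and matroid well-quasi-ordering only handles sequences of bipartite graphs and does not feed into the general case in any way you explain, since pivot-minors of non-bipartite graphs have no binary-matroid counterpart. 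In short: the conjecture remains open, and your proposal correctly identifies, but does not close, the missing structural ingredient.
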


Although the analog of Conjecture~\ref{conj:wqo} for vertex-minors is still open, Geelen and Oum~\cite{Oum2009} proved the analog of Conjecture~\ref{conj:inter} for vertex-minors.

This paper is organized as follows. In Section~\ref{sec:pre}, we introduce concepts of vertex-minors and pivot-minors, and review several inequalities for cut-rank functions. In Section~\ref{sec:pf}, we present simple lemmas on the cut-rank function. In Section~\ref{sec:mainpf}, we prove Theorem~\ref{thm:main}.

\section{Preliminaries}
\label{sec:pre}
For a graph $G$ and a vertex $v$ of $G$, let $N_{G}(v)$ be the set of vertices adjacent to $v$ in $G$.
For a graph~$G$ and a subset $X$ of $V(G)$, let $G[X]$ be the induced subgraph of $G$ on $X$.
For two sets $A$ and~$B$, let $A\triangle B=(A-B)\cup(B-A)$.

\paragraph{Vertex-minors and pivot-minors}
Note that for a graph $G$ and a vertex $v$ of $G$, the local complementation at $v$ replaces $G[N_{G}(v)]$ with its complement. A graph $H$ is \emph{locally equivalent} to a graph $G$ if $H$ can be obtained from~$G$ by applying a sequence of local complementations. 
Recall that a graph $H$ is a \emph{vertex-minor} of a graph $G$ if $H$ can be obtained from $G$ by applying a sequence of local complementations and deletions of vertices. 

For an edge $uv$ of a graph $G$, let $G\wedge uv=G*u*v*u$. Then $G\wedge uv$ is obtained from $G$ by \emph{pivoting} $uv$. 
Alternatively, pivoting $uv$ can be understood as an operation that removes an edge $xy$ if $x$, $y$ are non-adjacent 
and adds an edge $xy$ otherwise
for every pair $(x,y)\in (X_1\times X_2)\cup (X_2\times X_3)\cup (X_3\times X_1)$
where $X_1$ is the set of common neighbors of $u$ and $v$, 
$X_2$ is the set of neighbors of $u$ that are non-neighbors of $v$,
and $X_3$ is the set of neighbors of $v$ that are non-neighbors of $u$
and then swaps the labels of $u$ and $v$, see Oum~\cite{Oum2005} and Figure~\ref{fig:pivot}.
The graph $G\wedge uv$ is well defined since $G*u*v*u=G*v*u*v$ ~{\cite[Corollary 2.2]{Oum2005}}. 
A graph~$H$ is a \emph{pivot-minor} of a graph $G$ if $H$ can be obtained from $G$ by a sequence of pivoting and deleting vertices.

\begin{figure}
	\centering
	\begin{tikzpicture}
    \tikzstyle{v}=[circle, draw, solid, fill=black, inner sep=0pt, minimum width=3pt]
		\draw (-1,0) node[v,label=above:$u$](u){};
		\draw (1,0) node[v,label=above:$v$](v){};
		\draw (u)--(v);
		\draw (u)--(-0.2,-0.7);
		\draw (u)--(-0.3,-0.7);
		\draw (u)--(-0.4,-0.7);
		\draw (u)--(-1.2,-1.7);
		\draw (u)--(-1.3,-1.7);
		\draw (u)--(-1.4,-1.7);
		\draw (v)--(0.2,-0.7);
		\draw (v)--(0.3,-0.7);
		\draw (v)--(0.4,-0.7);
		\draw (v)--(1.2,-1.7);
		\draw (v)--(1.3,-1.7);
		\draw (v)--(1.4,-1.7);
		\draw [dashed, blue] (-0.4,-1.2)--(-1.2,-1.7);
		\draw [dashed, blue] (0,-1.2)--(-1.5,-2.6);
		\draw [dashed, blue] (0.4,-1.2)--(1.2,-1.7);
		\draw [dashed, blue] (0,-1.2)--(1.5,-2.6);
		\draw [dashed, blue] (-1.2,-1.7)--(1.2,-1.7);
		\draw [dashed, blue] (-1.5,-2.2)--(1.5,-2.2);
		\draw[draw, solid, black, fill=white] (0,-1) ellipse (0.6 and 0.4);
		\draw[draw, solid, black, fill=white] (-1.4,-2) ellipse (0.4 and 0.6);
		\draw[draw, solid, black, fill=white] (1.4,-2) ellipse (0.4 and 0.6);
	\end{tikzpicture}
	\qquad
	\begin{tikzpicture}
    \tikzstyle{v}=[circle, draw, solid, fill=black, inner sep=0pt, minimum width=3pt]
		\draw (-1,0) node[v,label=above:$v$](u){};
		\draw (1,0) node[v,label=above:$u$](v){};
		\draw (u)--(v);
		\draw (u)--(-0.2,-0.7);
		\draw (u)--(-0.3,-0.7);
		\draw (u)--(-0.4,-0.7);
		\draw (u)--(-1.2,-1.7);
		\draw (u)--(-1.3,-1.7);
		\draw (u)--(-1.4,-1.7);
		\draw (v)--(0.2,-0.7);
		\draw (v)--(0.3,-0.7);
		\draw (v)--(0.4,-0.7);
		\draw (v)--(1.2,-1.7);
		\draw (v)--(1.3,-1.7);
		\draw (v)--(1.4,-1.7);
		\draw [dashed, blue] (-0.4,-1.2)--(-1.5,-2.6);
		\draw [dashed, blue] (0,-1.2)--(-1.2,-1.7);
		\draw [dashed, blue] (0.4,-1.2)--(1.5,-2.6);
		\draw [dashed, blue] (0,-1.2)--(1.2,-1.7);
		\draw [dashed, blue] (-1.2,-1.7)--(1.5,-2.2);
		\draw [dashed, blue] (-1.5,-2.2)--(1.2,-1.7);
		\draw[draw, solid, black, fill=white] (0,-1) ellipse (0.6 and 0.4);
		\draw[draw, solid, black, fill=white] (-1.4,-2) ellipse (0.4 and 0.6);
		\draw[draw, solid, black, fill=white] (1.4,-2) ellipse (0.4 and 0.6);
	\end{tikzpicture}
	\caption{$G$ and $G\wedge uv$.}\label{fig:pivot}
\end{figure}
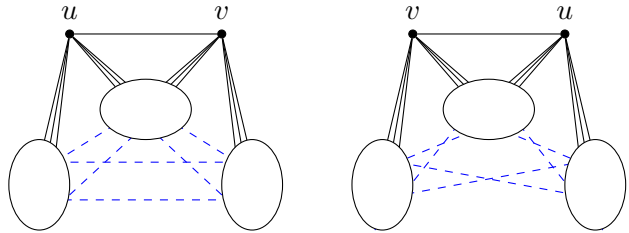

\begin{lemma}[Oum~\cite{Oum2005}]
\label{lem:local_equiv}
Let $G$ be a graph and $v$ be a vertex of $G$. If $x$ and $y$ are neighbors of $v$ in $G$, then $(G\wedge vx)\setminus v$ is locally equivalent to $(G\wedge vy)\setminus v$.
\end{lemma}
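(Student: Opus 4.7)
The plan is to reduce the claim to a comparison of two auxiliary graphs that can then be related by an explicit local operation.

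First, I would use the identity $G\wedge vx = G*x*v*x$ together with the fact that local complementation at any vertex $z\neq v$ commutes with deletion of $v$, since $*z$ only toggles edges inside $N_H(z)$ and the resulting set of toggled edges among vertices of $V(G)\setminus\{v\}$ is the same before and after the deletion of $v$. This gives
\[
(G\wedge vx)\setminus v \;=\; \bigl((G*x*v)\setminus v\bigr)*x,
\]
so $(G\wedge vx)\setminus v$ is locally equivalent to $(G*x*v)\setminus v$, and the analogous identity holds with $y$ in place of $x$. It therefore suffices to prove that $(G*x*v)\setminus v$ is locally equivalent to $(G*y*v)\setminus v$.

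Next, I would describe both sides explicitly. Using the formula $N_{G*w}(v)=(N_G(v)\triangle N_G(w))\setminus\{v\}$ for any $w\in N_G(v)$, together with the same commutation $(G*w)\setminus v = (G\setminus v)*w$, one sees that $(G*w*v)\setminus v$ is obtained from $G\setminus v$ by toggling all edges inside $N_G(w)\setminus\{v\}$ and then toggling all edges inside $(N_G(v)\triangle N_G(w))\setminus\{v\}$. Comparing the two toggle patterns for $w=x$ and $w=y$ via a short $\GF(2)$ computation reveals that the two resulting graphs differ precisely on a pairwise complete tripartite set of edges between three classes built from $N_G(v)$ and $N_G(x)\triangle N_G(y)$.

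Finally, this tripartite edge set has exactly the form of the toggle pattern produced by pivoting the edge $xy$, up to corrections on the edges incident to $x$ and $y$ themselves. A small case analysis, split according to whether $xy\in E(G)$ and how $N_G(x)\triangle N_G(y)$ meets $N_G(v)$, then exhibits a short sequence of local complementations on one of the two graphs that realises the needed difference and thereby establishes the local equivalence. The main obstacle is the bookkeeping in this final step: one must correctly match the three classes of the pivot with the three classes identified above while tracking how the pivot acts on edges incident to $x$ and $y$.
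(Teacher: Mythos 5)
This lemma is not proved in the paper at all; it is quoted from Oum's rank-width paper, so your attempt can only be compared with the argument there, which establishes a pivot identity of the form $(G\wedge vx)\wedge xy=G\wedge vy$ and then deletes $v$. Your first two steps are correct and verifiable: $(G\wedge vx)\setminus v=((G*x*v)\setminus v)*x$, and writing $H_w=(G*w*v)\setminus v$, a direct $\GF(2)$ computation confirms that $H_x$ and $H_y$ differ exactly by complementing all pairs between distinct classes among $N_G(v)\cap D$, $N_G(v)\setminus D$, $D\setminus N_G(v)$, where $D=N_G(x)\triangle N_G(y)$ (and this description is valid for \emph{all} pairs, including those meeting $\{x,y\}$). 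The problem is that your write-up stops exactly where the content of the lemma lies. A prescribed tripartite toggle is not in general realizable by local complementations that leave everything else untouched; the lemma holds only because this particular toggle coincides with an operation actually available in $H_x$. You assert this (``the pivot pattern of $xy$, up to corrections on the edges incident to $x$ and $y$'') but never verify it, deferring to an unspecified ``short sequence of local complementations'' to be found by case analysis; the hedge about ``corrections'' is precisely the point at which the argument could fail, so as written this is a plan rather than a proof.

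The gap does close, and more cleanly than you anticipate: no case analysis on whether $xy\in E(G)$ or on how $D$ meets $N_G(v)$ is needed. From the same bookkeeping one gets $N_{H_x}(x)=N_G(v)\setminus\{x\}$, so $y$ is adjacent to $x$ in $H_x$ and the pivot $H_x\wedge xy$ is defined; moreover, for $i\notin\{x,y\}$ the indicator of $N_{H_x}(y)$ equals $\chi_{D}(i)+\chi_{N_G(v)}(i)\bigl(1+A_{xy}\bigr)$ over $\GF(2)$. Substituting these two neighbourhoods into the toggle pattern of pivoting $xy$ reproduces your tripartite pattern on all pairs avoiding $\{x,y\}$ (the terms involving $A_{xy}$ cancel modulo $2$), and the exchange of $x$ and $y$ built into pivoting makes the rows of $x$ and $y$ match as well, giving the exact identity $H_y=H_x\wedge xy$ for every $G$. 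Hence $(G\wedge vy)\setminus v=\bigl(\bigl(((G\wedge vx)\setminus v)*x\bigr)\wedge xy\bigr)*y$, which is the desired local equivalence (indeed it shows $H_x$ and $H_y$ are pivot-equivalent). If you carry out this half-page verification, your argument becomes complete and is essentially the same proof as in the cited reference.
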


For a vertex $v$ of $G$ with a neighbor $u$, we write $G/v$ to denote $G\wedge uv\setminus v$. If $v$ has no neighbor in $G$, then we let $G/v$ denote $G\setminus v$. Then the graph $G/v$ is well-defined up to local equivalence by Lemma~\ref{lem:local_equiv}.
The following lemma can be easily deduced from isotropic systems~\cite{Bouchet1988}, and Geelen and Oum provide an elementary graph-theoretic proof.

\begin{lemma}[Geelen and Oum~{\cite[Lemma 3.1]{Oum2009}}]
\label{lem:onetoone}
Let $G$ be a graph and $v$ and $w$ be vertices of $G$. Then the following hold:
\begin{enumerate}[label=\rm(\arabic*)]
\item If $v\neq w$ and $vw\notin E(G)$, then $(G*w)\setminus v$, $(G*w*v)\setminus v$, and $(G*w)/v$ are locally equivalent to $G\setminus v$, $G*v\setminus v$, and $G/v$ respectively.
\item If $v\neq w$ and $vw\in E(G)$, then $(G*w)\setminus v$, $(G*w*v)\setminus v$, and $(G*w)/v$ are locally equivalent to $G\setminus v$, $G/v$, and $(G*v)\setminus v$ respectively.
\item If $v=w$, then $(G*w)\setminus v$, $(G*w*v)\setminus v$, and $(G*w)/v$ are locally equivalent to $G*v\setminus v$, $G\setminus v$, and $G/v$ respectively.
\end{enumerate}
\end{lemma}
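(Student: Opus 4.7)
The plan is to verify each of the nine claimed local equivalences by direct case analysis, relying on four elementary identities:
\begin{enumerate}[label=\rm(\Roman*)]
\item involutivity: $G*v*v=G$;
\item the cited corollary: $G*u*v*u=G*v*u*v$ when $uv\in E(G)$, each side equalling $G\wedge uv$;
\item commutation: $G*v*w=G*w*v$ when $v\neq w$ and $vw\notin E(G)$, verified by checking that each potential edge is toggled the same number of times in either order;
\item deletion-complementation: $(G*w)\setminus v=(G\setminus v)*w$ for $v\neq w$, since both sides toggle exactly the edges $xy$ with $x,y\in N_G(w)\setminus\{v\}$.
\end{enumerate}

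In case~(3), where $v=w$, the first two equivalences are immediate from (I). For the third, pick any neighbor $u$ of $v$ in $G$. By (II) and (I), $(G*v)\wedge uv=G*v*v*u*v=G*u*v$, while $G\wedge uv=G*u*v*u$. Applying (IV) then yields $G/v=((G*u*v)\setminus v)*u=((G*v)/v)*u$, so $(G*v)/v\sim G/v$ via a single local complementation at $u$.

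In case~(2), where $v\neq w$ and $vw\in E(G)$, the tactical observation is that $w$ itself is a valid pivot partner of $v$ in both $G$ and $G*w$. The first equivalence is (IV). For the third, $(G*w)\wedge wv=G*w*w*v*w=G*v*w$ by (I), and (IV) gives $(G*w)/v=(G*v\setminus v)*w\sim G*v\setminus v$. For the second, $G\wedge wv=G*w*v*w$ by (II), so $G/v=((G*w*v)\setminus v)*w$ by (IV), giving $(G*w*v)\setminus v\sim G/v$.

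In case~(1), where $v\neq w$ and $vw\notin E(G)$, the first two equivalences follow from (IV) (combined with (III) for the second), since $v\notin N_G(w)$ lets $*w$ commute freely with operations at $v$. The third is where the real work lies. Pick any neighbor $u$ of $v$ in $G$, which is still a neighbor in $G*w$ because $N_{G*w}(v)=N_G(v)$. When $uw\notin E(G)$, three successive applications of (III) establish the stronger identity $(G*w)\wedge uv=(G\wedge uv)*w$, and (IV) then yields $(G*w)/v=(G/v)*w\sim G/v$. The main obstacle is the remaining subcase $uw\in E(G)$, where the pre-deletion identity fails. I plan to handle it by a direct edge-set computation using the pivot formula: writing $\alpha(z)=[uz\in E(G)]$, $\beta(z)=[vz\in E(G)]$, and $\gamma(z)=[wz\in E(G)]$ for $z$ outside $\{u,v,w\}$, the graphs $((G*w)\wedge uv)\setminus v$ and $((G\wedge uv)*w)\setminus v=(G/v)*w$ are shown to differ only on edges $zz'$ with $\beta(z)=\beta(z')=1$, i.e.~inside $N_G(v)-\{u\}$. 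A further local complementation at $u$ applied to $(G/v)*w$ toggles precisely these edges (since $u$'s neighborhood there is exactly $N_G(v)-\{u\}$, and $u,w$ themselves lie outside this set, so $u$- and $w$-edges are unaffected), yielding $(G*w)/v=((G/v)*w)*u\sim G/v$.
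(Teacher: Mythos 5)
The paper does not prove this lemma: it is quoted verbatim from Geelen and Oum \cite[Lemma 3.1]{Oum2009}, with the remark that it can be deduced from isotropic systems or proved by elementary graph-theoretic means, so there is no in-paper argument to compare against. Your blind proof is correct and self-contained, and it is in the same elementary spirit as the cited source: a case analysis driven by the identities $G*v*v=G$, $G\wedge uv=G*u*v*u=G*v*u*v$, commutation of $*v$ and $*w$ for non-adjacent $v,w$, and $(G*w)\setminus v=(G\setminus v)*w$. I verified the two places where real content sits. First, in case (1) with $uw\notin E(G)$, the three applications of your identity (III) do go through, but each needs the intermediate non-adjacency check ($vw\notin E(G*u)$ because $w\notin N_G(u)$, and $uw\notin E(G*u*v)$ because $w\notin N_{G*u}(v)$); you should record these. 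Second, in the crucial subcase $uw\in E(G)$, your computation is right: writing adjacencies over $\GF(2)$, the graphs $\bigl((G*w)\wedge uv\bigr)\setminus v$ and $\bigl((G\wedge uv)*w\bigr)\setminus v=(G/v)*w$ agree on all pairs meeting $\{u,w\}$ and differ on a generic pair $zz'$ exactly by the term $\beta(z)\beta(z')$, while $N_{(G/v)*w}(u)=N_G(v)-\{u\}$ (the pivot gives $u$ the old row of $v$, and $vw\notin E(G)$ keeps $w$ out of it), so the final $*u$ repairs exactly the discrepancy and $(G*w)/v=\bigl((G/v)*w\bigr)*u$. The only other omission is the trivial isolated-vertex case of $G/v$ in items (1) and (3), where $G*v=G$ and $(G*w)/v=(G*w)\setminus v\;$ settle matters; also note your choices of pivot neighbor ($u$, respectively $w$) are legitimate because the paper's preceding lemma makes $G/v$ well defined up to local equivalence. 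With those small points written out, your proof is a complete substitute for the external citation, whereas the isotropic-systems route of \cite{Bouchet1988} would buy a shorter, more conceptual argument at the cost of extra machinery.
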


From Lemma~\ref{lem:onetoone}, we can deduce the following lemma easily.

\begin{lemma}
\label{lem:perm}
Let $H$ be a vertex-minor of a graph $G$ and $v$ be a vertex of $H$. Let $H_{1}=H\setminus v$, $H_{2}=H*v\setminus v$, and $H_{3}=H/v$ and let $G_{1}=G\setminus v$, $G_{2}=G*v\setminus v$, and $G_{3}=G/v$. Then there exists a permutation $\sigma:\{1,2,3\}\rightarrow\{1,2,3\}$ such that $H_{i}$ is a vertex-minor of $G_{\sigma(i)}$ for each $i\in\{1,2,3\}$.
\end{lemma}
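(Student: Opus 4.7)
The plan is to induct on the length of a fixed sequence of elementary operations (local complementations and vertex deletions) transforming $G$ into $H$; the base case $G=H$ is immediate with $\sigma=\mathrm{id}$. For the inductive step, write the sequence as one operation taking $G$ to an intermediate graph $G'$, followed by a shorter sequence from $G'$ to $H$. By the inductive hypothesis applied to $H$ and $G'$, there is a permutation $\sigma'$ of $\{1,2,3\}$ with $H_i$ a vertex-minor of $G'_{\sigma'(i)}$. It then suffices to exhibit a permutation $\tau$ of $\{1,2,3\}$ such that $G'_j$ is a vertex-minor of $G_{\tau(j)}$, and take $\sigma=\tau\circ\sigma'$.

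The first case is $G'=G*w$ for some $w\in V(G)$. Here Lemma~\ref{lem:onetoone} is tailor-made: each of its three parts (according to whether $w\neq v$ and $vw\notin E(G)$, $w\neq v$ and $vw\in E(G)$, or $w=v$) states exactly that $G'_1,G'_2,G'_3$ are locally equivalent to $G_{\tau(1)},G_{\tau(2)},G_{\tau(3)}$ for an explicit permutation~$\tau$. Since locally equivalent graphs are vertex-minors of each other, this case is immediate.

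The second case is $G'=G\setminus w$ with $w\neq v$ (necessarily $w\neq v$ since $v\in V(H)\subseteq V(G')$). Here I aim to show $\tau=\mathrm{id}$ works, i.e.\ each $G'_j$ is in fact a vertex-minor of $G_j$. For $j=1$ this is trivial: $G\setminus w\setminus v=G\setminus v\setminus w=G_1\setminus w$. For $j=2$ the point is that local complementation at $v$ commutes with deletion of a vertex $w\neq v$, since local complementation at $v$ only modifies edges between pairs of neighbors of $v$, and after deleting $w$ the remaining such pairs are the same whether we delete first or complement first; hence $G'*v\setminus v=G*v\setminus w\setminus v=G_2\setminus w$. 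For $j=3$, the same commutation argument applies to the pivot $G\wedge uv$ provided $v$ has a neighbor $u\neq w$ in $G$, giving $G'/v=G_3\setminus w$ up to local equivalence.

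The only delicate subcase is when $w$ is the unique neighbor of $v$ in $G$, so that $G'/v$ must be taken as $G\setminus w\setminus v$ while $G/v$ is taken as $G\wedge wv\setminus v$. I would handle this by a direct inspection of the pivot $G\wedge wv$: since $N_G(v)=\{w\}$, pivoting $wv$ leaves all edges among $V(G)\setminus\{w,v\}$ untouched, makes $w$ adjacent only to $v$, and transfers $w$'s old neighborhood to $v$. Therefore after deleting $v$ from $G\wedge wv$ the vertex $w$ becomes isolated, and one further deletion of $w$ recovers $G\setminus w\setminus v=G'/v$, showing that $G'/v$ is a vertex-minor of $G_3$. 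This edge case is the main (though still small) obstacle; once it is dispatched, the identity permutation works for $\tau$ in the deletion case, and composing with $\sigma'$ completes the induction.
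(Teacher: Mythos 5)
Your proposal is correct and follows essentially the same route as the paper: induction along the defining sequence of operations, with Lemma~\ref{lem:onetoone} supplying the permutation for a local complementation step and a direct commutation argument handling deletions. The only cosmetic difference is that the paper first pushes all deletions to the end (writing $H=G*u_{1}*\cdots*u_{m}\setminus X$) and treats the deletion of the set $X$ in the base case, where its subcase $N_{G}(v)\subseteq X$ plays exactly the role of your ``$w$ is the unique neighbor of $v$'' subcase.
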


\begin{proof}

Since $H$ is a vertex-minor of $G$, there exist a sequence $u_{1},\ldots, u_{m}$ of vertices of $G$ and a subset $X$ of $V(G)$ such that $H=G*u_{1}*\cdots *u_{m}\setminus X$. We proceed by induction on $m$. If $m=0$, then $H=G\setminus X$. Obviously, $H_{i}=G_{i}\setminus X$ for each $i\in\{1,2\}$. We claim that $H_{3}=G_{3}\setminus X$. If there is a neighbor $w$ of $v$ in $G$ which is not in $X$, then $H_{3}=H\wedge vw\setminus v=(G\wedge vw\setminus v)\setminus X=G_{3}\setminus X$. If $N_{G}(v)\subseteq X$, then $H_{3}=H\setminus v=G\setminus X\setminus v$. Since $X$ contains all the neighbors of $v$, it is easy to check that $G_{3}\setminus X=((G\wedge uv)\setminus v)\setminus X=G\setminus X\setminus v=H_{3}$.

Therefore we may assume that $m\neq 0$. Let $H'=G*u_{1}$. Then $H=H'*u_{2}*\cdots *u_{m}\setminus X$, $H_{1}'=H'\setminus v$, $H_{2}'=H'*v\setminus v$, and $H_{3}'=H'/v$. By the induction hypothesis, there is a permutation $\sigma_{1}:\{1,2,3\}\rightarrow\{1,2,3\}$ such that $H_{i}$ is a vertex-minor of $H'_{\sigma_{1}(i)}$ for each $i\in\{1,2,3\}$. By Lemma~\ref{lem:onetoone}, there is a permutation $\sigma_{2}:\{1,2,3\}\rightarrow\{1,2,3\}$ such that $H'_{j}$ is locally equivalent to $G_{\sigma_{2}(j)}$ for each $j\in\{1,2,3\}$. Let $\sigma=\sigma_{2}\circ\sigma_{1}$. Then $H_{i}$ is a vertex-minor of $G_{\sigma(i)}$ for each $i\in\{1,2,3\}$. 
\end{proof}

\paragraph{Cut-rank function and connectivity}
For a finite set $V$, a $V\times V$-matrix $A$, 
and subsets $X$ and $Y$ of $V$, let $A[X,Y]$ be the $X\times Y$-submatrix of $A$. 
For a graph $G$, let $A_{G}$ be the adjacency matrix of~$G$ over the binary field $\GF(2)$. The \emph{cut-rank} $\rho_{G}(X)$ of $X\subseteq V(G)$ is defined by 
\[\rho_G(X)=\rnk(A_{G}[X,V(G)-X]).\] 
It is obvious to check that $\rho_{G}(X)=\rho_{G}(V(G)-X)$. 

The following lemmas give some properties of the cut-rank function.
\begin{lemma}[see Oum~{\cite[Proposition 2.6]{Oum2005}}]
\label{lem:local}
If a graph $G'$ is locally equivalent to a graph $G$, then $\rho_{G}(X)=\rho_{G'}(X)$ for each $X\subseteq V(G)$.
\end{lemma}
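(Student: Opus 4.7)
The plan is to reduce to the case of a single local complementation and show that local complementation at any vertex $v$ is an elementary row/column operation on the cut-rank matrix. Since local equivalence is by definition a sequence of local complementations, an easy induction on the length of that sequence will then finish.

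So fix a vertex $v$ of $G$ and a subset $X \subseteq V(G)$, and consider the submatrix $M = A_G[X, V(G)-X]$ whose rank equals $\rho_G(X)$. Local complementation at $v$ toggles the edge $xy$ for every pair $x, y$ of distinct neighbors of $v$, and leaves all other entries of $A_G$ unchanged; in particular it does not alter any row or column indexed by $v$ itself. The proof splits into two symmetric cases according to whether $v \in X$ or $v \in V(G)-X$.

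Suppose first that $v \in X$, and write $Y = V(G)-X$. The only entries of $M$ affected are those indexed by pairs $(x, y)$ with $x \in N_G(v) \cap X \setminus \{v\}$ and $y \in N_G(v) \cap Y$. For each such $x$, the row $M[x, \cdot]$ is modified precisely by adding (over $\GF(2)$) the characteristic vector of $N_G(v) \cap Y$, which is exactly the row $M[v, \cdot]$. Thus $A_{G*v}[X, Y]$ is obtained from $M$ by adding the row indexed by $v$ to each row indexed by a vertex of $N_G(v) \cap X \setminus \{v\}$. These are elementary row operations over $\GF(2)$, so they preserve the rank, giving $\rho_{G*v}(X) = \rho_G(X)$. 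The case $v \in Y$ is identical after transposing: now the column $M[\cdot, v]$ is added to each column $M[\cdot, y]$ with $y \in N_G(v) \cap Y \setminus \{v\}$, which is an elementary column operation.

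Having established invariance under a single local complementation, the general statement follows by a routine induction: writing $G' = G * v_1 * \cdots * v_t$, one applies the single-step result $t$ times. There is no real obstacle here; the only thing to be careful about is the bookkeeping in the row-operation case, namely that the row $M[v, \cdot]$ itself is unchanged (since $v$ has no neighbors in $N_G(v)$ other than through the toggled edges, which do not touch row $v$), so adding it to the rows indexed by $N_G(v) \cap X \setminus \{v\}$ is a legitimate elementary operation that produces exactly $A_{G*v}[X, Y]$.
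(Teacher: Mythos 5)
Your argument is correct: when $v\in X$ the matrix $A_{G*v}[X,V(G)-X]$ is indeed obtained from $A_G[X,V(G)-X]$ by adding the (unchanged) row indexed by $v$ to the rows indexed by $N_G(v)\cap X$, and symmetrically with columns when $v\notin X$, so the rank over $\GF(2)$ is preserved and induction on the sequence of local complementations finishes the proof. The paper does not prove this lemma itself but cites Oum (2005, Proposition~2.6), whose proof is exactly this elementary row/column-operation computation, so your write-up follows the same standard approach.
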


\begin{lemma}[see Oum~{\cite[Corollary 4.2]{Oum2005}}]
\label{lem:subeq}
Let $G$ be a graph and let $X$, $Y$ be subsets of $V(G)$. Then
\[
\rho_{G}(X)+\rho_{G}(Y)\geq\rho_{G}(X\cap Y)+\rho_{G}(X\cup Y).
\]
\end{lemma}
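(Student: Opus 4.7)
The plan is to reduce the inequality to the subspace dimension formula $\dim(U+W)+\dim(U\cap W)=\dim U+\dim W$ by attaching to each $Z\subseteq V:=V(G)$ a single subspace $U_{Z}\subseteq\GF(2)^{V}$ whose dimension records $\rho_{G}(Z)$ up to an additive shift. For $Z\subseteq V$ I would take $U_{Z}:=R_{Z}+E_{Z}$, where $R_{Z}$ is the subspace spanned by the rows of $A_{G}$ indexed by $Z$ and $E_{Z}$ is the coordinate subspace of $\GF(2)^{V}$ consisting of vectors supported in $Z$.

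First I would prove the identity $\rho_{G}(Z)=\dim U_{Z}-|Z|$. The cut-rank $\rho_{G}(Z)$ equals the dimension of the image of $R_{Z}$ under the projection $\GF(2)^{V}\to\GF(2)^{V-Z}$, whose kernel on $R_{Z}$ is $R_{Z}\cap E_{Z}$; rank-nullity then gives $\rho_{G}(Z)=\dim R_{Z}-\dim(R_{Z}\cap E_{Z})$, and the second isomorphism theorem $R_{Z}/(R_{Z}\cap E_{Z})\cong (R_{Z}+E_{Z})/E_{Z}$ converts this into $\dim U_{Z}-\dim E_{Z}=\dim U_{Z}-|Z|$.

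Next I would verify the two set-theoretic identities $U_{X}+U_{Y}=U_{X\cup Y}$ and $U_{X\cap Y}\subseteq U_{X}\cap U_{Y}$, both of which are immediate from $R_{X}+R_{Y}=R_{X\cup Y}$, $E_{X}+E_{Y}=E_{X\cup Y}$, and the monotonicity of $R_{\bullet}$ and $E_{\bullet}$ in their index set. The dimension formula will then give
\[
\dim U_{X}+\dim U_{Y}=\dim(U_{X}+U_{Y})+\dim(U_{X}\cap U_{Y})\geq \dim U_{X\cup Y}+\dim U_{X\cap Y},
\]
and subtracting $|X|+|Y|=|X\cap Y|+|X\cup Y|$ from both sides, together with the identity proved in the first step, yields the claimed inequality.

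The subtle point will be justifying why one enlarges $R_{Z}$ to $U_{Z}$ in the first place. Although $R_{X}+R_{Y}=R_{X\cup Y}$, the intersection $R_{X}\cap R_{Y}$ can strictly contain $R_{X\cap Y}$, so applying the dimension formula directly to the family $R_{\bullet}$ does not produce submodularity of $\dim R_{Z}-\dim(R_{Z}\cap E_{Z})$. Adding $E_{Z}$ absorbs exactly the $R_{Z}\cap E_{Z}$ correction that separates $\dim R_{Z}$ from $\rho_{G}(Z)$, and simultaneously restores the containment $U_{X\cap Y}\subseteq U_{X}\cap U_{Y}$, after which a single application of the dimension formula closes the argument.
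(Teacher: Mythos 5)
Your argument is correct: the identity $\rho_{G}(Z)=\dim(R_{Z}+E_{Z})-|Z|$ follows exactly as you say (the projection $\GF(2)^{V}\to\GF(2)^{V-Z}$ sends $R_{Z}$ onto the row space of $A_{G}[Z,V-Z]$ with kernel $R_{Z}\cap E_{Z}$), the lattice relations $U_{X}+U_{Y}=U_{X\cup Y}$ and $U_{X\cap Y}\subseteq U_{X}\cap U_{Y}$ are immediate, and the modular dimension formula plus $|X|+|Y|=|X\cap Y|+|X\cup Y|$ closes the proof. Note, however, that the paper does not prove this lemma at all; it is quoted from Oum's rank-width paper, where it is deduced from a submodular inequality for ranks of complementary submatrices, namely $\rnk M[A_{1},B_{1}]+\rnk M[A_{2},B_{2}]\geq \rnk M[A_{1}\cap A_{2},B_{1}\cup B_{2}]+\rnk M[A_{1}\cup A_{2},B_{1}\cap B_{2}]$, applied to the (symmetric) adjacency matrix. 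So your route is genuinely different: instead of proving a two-parameter rank inequality, you absorb the boundary correction into a single subspace $U_{Z}=R_{Z}+E_{Z}$ and invoke only $\dim(U+W)+\dim(U\cap W)=\dim U+\dim W$. What this buys is a short, self-contained argument that never uses symmetry of $A_{G}$, hence actually shows that $Z\mapsto\rnk A[Z,V-Z]$ is submodular for an arbitrary square matrix $A$ over any field; what the cited submatrix inequality buys in Oum's setting is a more flexible tool (it also yields the refined inequalities of Lemma~\ref{lem:subtool} used elsewhere in the paper, which your single-subspace formulation does not directly give).
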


\begin{lemma}[Oum~{\cite[Lemma 2.3]{Oum2020}}] 
\label{lem:subtool}
Let $G$ be a graph and $v$ be a vertex of $G$. Let $X$ and $Y$ be subsets of $V(G)-\{v\}$. Then the following hold:
\begin{enumerate}[label=\rm(S\arabic*)]
\item\label{item:s1} $\rho_{G\setminus v}(X)+\rho_{G}(Y\cup\{v\})\geq\rho_{G\setminus v}(X\cap Y)+\rho_{G}(X\cup Y\cup\{v\})$.
\item\label{item:s2} $\rho_{G\setminus v}(X)+\rho_{G}(Y)\geq\rho_{G}(X\cap Y)+\rho_{G\setminus v}(X\cup Y)$.
\end{enumerate}
\end{lemma}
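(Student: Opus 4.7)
The plan is to express each of (S1) and (S2) as ordinary submodularity in $G\setminus v$ plus a small correction term that tracks the interaction with the deleted vertex~$v$. For $Z\subseteq V(G)-\{v\}$ I would introduce
\[\alpha(Z) = \rho_G(Z) - \rho_{G\setminus v}(Z), \qquad \beta(Z) = \rho_G(Z\cup\{v\}) - \rho_{G\setminus v}(Z).\]
Both quantities take values in $\{0,1\}$: the matrix $A_G[Z, V(G)-Z]$ is obtained from $A_G[Z, V(G)-Z-\{v\}]$ by appending the single column indexed by $v$ (giving $\alpha$), while $A_G[Z\cup\{v\}, V(G)-Z-\{v\}]$ is obtained from the same matrix by appending the single row indexed by $v$ (giving $\beta$), and appending one row or column can change rank by at most one.

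The heart of the argument is to prove that $\alpha$ is monotone nondecreasing and $\beta$ is monotone nonincreasing under set inclusion, both by a witness-extension argument. Given $Z_1\subseteq Z_2$, if $\alpha(Z_2)=0$ then there is a vector $c\in \GF(2)^{V(G)-Z_2-\{v\}}$ with $A_G[Z_2, V(G)-Z_2-\{v\}]\,c = A_G[Z_2, v]$; restricting the rows to $Z_1$ and extending $c$ by zeros on the coordinates in $Z_2-Z_1$ certifies $\alpha(Z_1)=0$. Symmetrically, if $\beta(Z_1)=0$ has row-witness $r\in \GF(2)^{Z_1}$ satisfying $r^{T} A_G[Z_1, V(G)-Z_1-\{v\}] = A_G[v, V(G)-Z_1-\{v\}]$, then padding $r$ by zeros over $Z_2-Z_1$ and restricting the column index set to $V(G)-Z_2-\{v\}$ gives $\beta(Z_2)=0$.

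Once these monotonicities are in hand, both inequalities reduce to submodularity. For (S1), substituting $\rho_G(Y\cup\{v\}) = \rho_{G\setminus v}(Y) + \beta(Y)$ and $\rho_G(X\cup Y\cup\{v\}) = \rho_{G\setminus v}(X\cup Y) + \beta(X\cup Y)$ turns the desired inequality into
\[\bigl[\rho_{G\setminus v}(X) + \rho_{G\setminus v}(Y) - \rho_{G\setminus v}(X\cap Y) - \rho_{G\setminus v}(X\cup Y)\bigr] + \bigl[\beta(Y) - \beta(X\cup Y)\bigr] \geq 0,\]
where the first bracket is nonnegative by Lemma~\ref{lem:subeq} applied in $G\setminus v$ and the second by monotonicity of $\beta$ because $Y\subseteq X\cup Y$. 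Inequality (S2) follows identically, using $\alpha$ in place of $\beta$ and the inequality $\alpha(X\cap Y)\leq \alpha(Y)$ that comes from $X\cap Y\subseteq Y$. The only real subtlety is spotting that (S1) calls for the row-correction $\beta$ while (S2) calls for the column-correction $\alpha$, and that the two monotonicities run in opposite directions; beyond that, the argument is routine linear algebra and I anticipate no serious obstacle.
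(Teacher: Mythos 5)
Your argument is correct. Note that the paper itself does not prove this lemma at all; it imports it from Oum's paper on rank connectivity and pivot-minors, where the proof is essentially a one-line application of the submodularity of $\rnk(A[X,Y])$ as a function of the pair (row set, column set) of a fixed matrix: choosing the row/column pairs $(X,\,V(G)-X-\{v\})$ and $(Y,\,V(G)-Y)$ (resp.\ $(Y\cup\{v\},\,V(G)-Y-\{v\})$) and intersecting/uniting them directly yields \ref{item:s2} (resp.\ \ref{item:s1}), the same mechanism that underlies Lemma~\ref{lem:subeq}. Your route is genuinely different but sound: you decompose each inequality into ordinary cut-rank submodularity inside $G\setminus v$ (Lemma~\ref{lem:subeq}) plus a correction term, $\alpha(Z)=\rho_G(Z)-\rho_{G\setminus v}(Z)$ for the appended $v$-column and $\beta(Z)=\rho_G(Z\cup\{v\})-\rho_{G\setminus v}(Z)$ for the appended $v$-row, and your witness-extension arguments correctly establish that $\alpha$ is nondecreasing and $\beta$ is nonincreasing (for $Z_1\subseteq Z_2$ the column set $V(G)-Z_2-\{v\}$ sits inside $V(G)-Z_1-\{v\}$, so the certifying linear combinations restrict/pad exactly as you describe), after which $\beta(Y)\ge\beta(X\cup Y)$ gives \ref{item:s1} and $\alpha(Y)\ge\alpha(X\cap Y)$ gives \ref{item:s2}. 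The direct submatrix-submodularity proof is shorter and needs no case analysis of correction terms, while your version isolates reusable monotonicity facts (how deleting a vertex interacts with cut-rank on nested sets) and makes the $\{0,1\}$-valued ``interaction with $v$'' explicit; either is a legitimate self-contained replacement for the citation.
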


\begin{lemma}
\label{lem:delrank}
Let $G$ be a graph and $v$ be a vertex of $G$. For a subset $X$ of $V(G)-\{v\}$, we have
\begin{enumerate}[label=\rm(\roman*)]
\item\label{item:2.4i} $\rho_{G\setminus v}(X)+1\geq\rho_{G}(X)\geq\rho_{G\setminus v}(X)$.
\item\label{item:2.4ii} $\rho_{G\setminus v}(X)+1\geq\rho_{G}(X\cup\{v\})\geq\rho_{G\setminus v}(X)$.
\end{enumerate}
\end{lemma}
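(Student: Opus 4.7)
The plan is to derive both parts directly from the definition of the cut-rank as a submatrix rank, invoking only the elementary linear-algebra fact that adding or deleting a single row or column of a matrix can change its rank by at most one.

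First I would unpack the relevant adjacency submatrices. Since $v\notin X$, deleting $v$ from $G$ amounts to discarding the row and column indexed by $v$ in $A_{G}$; in particular,
\[
A_{G\setminus v}[X,\,V(G\setminus v)-X]=A_{G}[X,\,V(G)-X-\{v\}],
\]
which is the matrix whose rank equals $\rho_{G\setminus v}(X)$. This matrix is obtained from $A_{G}[X,\,V(G)-X]$ (whose rank is $\rho_G(X)$) by removing the single column indexed by $v$, and it is obtained from $A_{G}[X\cup\{v\},\,V(G)-X-\{v\}]$ (whose rank is $\rho_G(X\cup\{v\})$) by removing the single row indexed by $v$.

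With these identifications in hand, part~\ref{item:2.4i} follows because removing one column never raises the rank and lowers it by at most one, yielding $\rho_{G\setminus v}(X)\leq\rho_G(X)\leq\rho_{G\setminus v}(X)+1$; and part~\ref{item:2.4ii} follows analogously because removing one row never raises the rank and lowers it by at most one, yielding $\rho_{G\setminus v}(X)\leq\rho_G(X\cup\{v\})\leq\rho_{G\setminus v}(X)+1$. I do not expect any substantive obstacle; the only care needed is the bookkeeping to confirm that in each case it is exactly the row or column indexed by $v$ that is being added or deleted. In particular, neither submodularity (Lemma~\ref{lem:subeq}) nor the sharper inequalities of Lemma~\ref{lem:subtool} are required for this lemma.
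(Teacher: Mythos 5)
Your proof is correct and is essentially the paper's argument: the paper's one-line proof ("removing a row or a column of a matrix decreases the rank by at most $1$ and never increases the rank") is exactly the elementary fact you invoke, and your explicit identification of the submatrices involved merely spells out the bookkeeping the paper leaves implicit.
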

\begin{proof}
Observe that removing a row or a column of a matrix decreases the rank by at most~$1$ and never increases the rank.
\end{proof}

Let $G$ be a graph and $S$, $T$ be disjoint subsets of $V(G)$. 
The \emph{connectivity between $S$ and $T$ in $G$}, denoted by $\kappa_{G}(S,T)$, is defined by $\min_{S\subseteq X\subseteq V(G)-T}\rho_{G}(X)$. 

\begin{lemma}
\label{lem:kmonotone}
Let $H$ be a vertex-minor of a graph $G$ and $S$ and $T$ be disjoint subsets of $V(H)$. Then $\kappa_{H}(S,T)\leq\kappa_{G}(S,T)$.
\end{lemma}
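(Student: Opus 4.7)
The plan is to observe that every vertex-minor operation factors into local complementations (which preserve the entire cut-rank function by Lemma~\ref{lem:local}) and single vertex deletions, and then to show that a single vertex deletion cannot increase $\kappa_{G}(S,T)$. Combining the two and inducting on the length of the sequence of operations that produces $H$ from $G$ immediately yields the lemma.

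The only real content is the vertex-deletion step: given $v\in V(G)-(S\cup T)$, I want $\kappa_{G\setminus v}(S,T)\le\kappa_{G}(S,T)$. To see this, pick a minimizer $X^{*}$ with $S\subseteq X^{*}\subseteq V(G)-T$ and $\rho_{G}(X^{*})=\kappa_{G}(S,T)$, and split into two cases. If $v\notin X^{*}$, then $X^{*}$ itself is a feasible set for $\kappa_{G\setminus v}(S,T)$ and Lemma~\ref{lem:delrank}\ref{item:2.4i} gives $\rho_{G\setminus v}(X^{*})\le\rho_{G}(X^{*})$. If $v\in X^{*}$, then $X^{*}-\{v\}$ is a feasible set (here I use that $v\notin S\cup T$, so $S\subseteq X^{*}-\{v\}\subseteq V(G\setminus v)-T$), and Lemma~\ref{lem:delrank}\ref{item:2.4ii} applied with $X=X^{*}-\{v\}$ gives $\rho_{G\setminus v}(X^{*}-\{v\})\le\rho_{G}(X^{*})$. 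Either way we exhibit a feasible set for $\kappa_{G\setminus v}(S,T)$ whose cut-rank is at most $\kappa_{G}(S,T)$.

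For the induction, I would write $H=G*u_{1}*\cdots *u_{m}\setminus X$ as in the proof of Lemma~\ref{lem:perm}. By Lemma~\ref{lem:local}, the graph $G':=G*u_{1}*\cdots *u_{m}$ satisfies $\rho_{G'}=\rho_{G}$ on every subset of $V(G)$, hence $\kappa_{G'}(S,T)=\kappa_{G}(S,T)$. Now $H$ is obtained from $G'$ by deleting the vertices of $X$ one at a time, and each deletion is of a vertex outside $S\cup T$ (since $S,T\subseteq V(H)\subseteq V(G)-X$), so applying the vertex-deletion inequality $|X|$ times yields $\kappa_{H}(S,T)\le\kappa_{G'}(S,T)=\kappa_{G}(S,T)$.

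There is no real obstacle here; the only subtlety is being careful in the $v\in X^{*}$ subcase to use the correct half of Lemma~\ref{lem:delrank} and to note that $X^{*}-\{v\}$ still contains $S$ because $v\notin S$.
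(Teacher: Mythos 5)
Your proof is correct and follows essentially the same route as the paper, which simply invokes Lemma~\ref{lem:local} for the local complementations and Lemma~\ref{lem:delrank} for single vertex deletions; you have merely written out the details (the choice of a minimizer $X^{*}$ and the case split $v\in X^{*}$ versus $v\notin X^{*}$, using parts~\ref{item:2.4ii} and~\ref{item:2.4i} respectively) that the paper leaves implicit.
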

\begin{proof}
The conclusion follows from Lemma~\ref{lem:local} and (i) of Lemma~\ref{lem:delrank}.
\end{proof}

\begin{lemma}[Oum and Seymour~{\cite[Lemma 1]{Oum2007}}]
\label{lem:subconn}
Let $G$ be a graph and $X_{1}$, $X_{2}$, $Y_{1}$, and $Y_{2}$ be subsets of $V(G)$ such that $X_{1}\cap X_{2}=Y_{1}\cap Y_{2}=\emptyset$. Then, we have
\[
\kappa_{G}(X_{1},X_{2})+\kappa_{G}(Y_{1},Y_{2})\geq\kappa_{G}(X_{1}\cap Y_{1},X_{2}\cup Y_{2})+\kappa_{G}(X_{1}\cup Y_{1},X_{2}\cap Y_{2}).
\]
\end{lemma}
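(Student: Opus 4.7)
The plan is a standard uncrossing argument exploiting the submodularity of the cut-rank function (Lemma~\ref{lem:subeq}). First I would choose minimizers: pick $A$ with $X_{1}\subseteq A\subseteq V(G)-X_{2}$ such that $\rho_{G}(A)=\kappa_{G}(X_{1},X_{2})$, and pick $B$ with $Y_{1}\subseteq B\subseteq V(G)-Y_{2}$ such that $\rho_{G}(B)=\kappa_{G}(Y_{1},Y_{2})$. Such $A$ and $B$ exist by the definition of $\kappa_{G}$ as a minimum.

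The heart of the argument is to apply submodularity to $A$ and $B$:
\[
\rho_{G}(A)+\rho_{G}(B)\geq\rho_{G}(A\cap B)+\rho_{G}(A\cup B).
\]
Then I would verify that $A\cap B$ is a feasible set for the minimization defining $\kappa_{G}(X_{1}\cap Y_{1},X_{2}\cup Y_{2})$: clearly $A\cap B\supseteq X_{1}\cap Y_{1}$, and $(A\cap B)\cap(X_{2}\cup Y_{2})=\emptyset$ because $A\cap X_{2}=\emptyset$ and $B\cap Y_{2}=\emptyset$. Here one uses the hypotheses $X_{1}\cap X_{2}=Y_{1}\cap Y_{2}=\emptyset$ to see that $X_{1}\cap Y_{1}$ and $X_{2}\cup Y_{2}$ are themselves disjoint so that $\kappa_{G}$ is even defined. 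Symmetrically, $A\cup B$ is feasible for the minimization defining $\kappa_{G}(X_{1}\cup Y_{1},X_{2}\cap Y_{2})$, again using the disjointness hypotheses to ensure that $X_{1}\cup Y_{1}$ and $X_{2}\cap Y_{2}$ are disjoint.

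Hence $\rho_{G}(A\cap B)\geq\kappa_{G}(X_{1}\cap Y_{1},X_{2}\cup Y_{2})$ and $\rho_{G}(A\cup B)\geq\kappa_{G}(X_{1}\cup Y_{1},X_{2}\cap Y_{2})$, and chaining these bounds with the submodular inequality gives the desired conclusion.

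There is no real obstacle: the only subtlety is the bookkeeping to confirm that $A\cap B$ and $A\cup B$ really do respect the correct ``source'' and ``sink'' constraints after uncrossing, which reduces to a single-line set-theoretic check using $X_{1}\cap X_{2}=Y_{1}\cap Y_{2}=\emptyset$. So the proof should be short, maybe half a page with all the details spelled out.
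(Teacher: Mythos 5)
Your proof is correct: the uncrossing of minimizers $A$ and $B$ via the submodularity of $\rho_{G}$ (Lemma~\ref{lem:subeq}), together with the feasibility checks for $A\cap B$ and $A\cup B$, is exactly the standard argument. The paper itself only cites this lemma from Oum and Seymour rather than reproving it, and their proof proceeds in essentially the same way, so there is nothing to add.
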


The following corollaries are easy consequences of Theorem~\ref{thm:oum_linking}.

\begin{corollary}
\label{cor:one}
Let $G$ be a graph and $Q$, $R$, $S$, and $T$ be subsets of $V(G)$ such that $Q\cap R=S\cap T=\emptyset$. Let $F=V(G)-(Q\cup R\cup S\cup T)$, $k=\kappa_{G}(Q,R)$, and $\ell=\kappa_{G}(S,T)$. For every vertex $v$ of~$F$, at least one of the following holds:
\begin{enumerate}[label=\rm(\roman*)]
\item $\kappa_{G\setminus v}(Q,R)=k$ and $\kappa_{G\setminus v}(S,T)=\ell$.
\item $\kappa_{G*v\setminus v}(Q,R)=k$ and $\kappa_{G*v\setminus v}(S,T)=\ell$.
\item $\kappa_{G\wedge uv\setminus v}(Q,R)=k$ and $\kappa_{G\wedge uv\setminus v}(S,T)=\ell$ for each neighbor $u$ of $v$.
\end{enumerate}
\end{corollary}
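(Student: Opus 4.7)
The plan is to apply Theorem~\ref{thm:oum_linking} twice---once to the pair $(Q,R)$ and once to $(S,T)$---and then combine the two conclusions by a pigeonhole argument on the index set $\{1,2,3\}$.

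Fix any vertex $v\in F$. Since $F\subseteq V(G)-(Q\cup R)$, applying Theorem~\ref{thm:oum_linking} to the pair $(Q,R)$ yields that at least two of the following three statements hold, which I label $A_1,A_2,A_3$: $A_1$ is $\kappa_{G\setminus v}(Q,R)=k$, $A_2$ is $\kappa_{G*v\setminus v}(Q,R)=k$, and $A_3$ is $\kappa_{G\wedge uv\setminus v}(Q,R)=k$ for every neighbor $u$ of $v$. Likewise, since $F\subseteq V(G)-(S\cup T)$, applying Theorem~\ref{thm:oum_linking} to the pair $(S,T)$ yields that at least two of the analogous statements $B_1,B_2,B_3$ hold, obtained from $A_1,A_2,A_3$ by replacing $(Q,R,k)$ with $(S,T,\ell)$.

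Now set $I_A=\{i\in\{1,2,3\}:A_i\text{ holds}\}$ and $I_B=\{i\in\{1,2,3\}:B_i\text{ holds}\}$. Because $|I_A|\geq 2$ and $|I_B|\geq 2$ while $|I_A\cup I_B|\leq 3$, inclusion--exclusion gives $|I_A\cap I_B|\geq 2+2-3=1$. Any index $i\in I_A\cap I_B$ witnesses simultaneously $A_i$ and $B_i$, which is precisely conclusion (i), (ii), or (iii) of Corollary~\ref{cor:one} according to whether $i=1$, $2$, or $3$.

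There is really no obstacle here: all the substantive work---showing that the rank-connectivity of a single pair of vertex sets can always be preserved under at least two of the three vertex-minor reductions at $v$---is already packaged inside Theorem~\ref{thm:oum_linking}, and the only thing to check is that $v\in F$ is admissible for both applications, which is immediate from $F=V(G)-(Q\cup R\cup S\cup T)$.
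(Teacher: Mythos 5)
Your proof is correct and is exactly the intended argument: the paper states Corollary~\ref{cor:one} without proof as an ``easy consequence'' of Theorem~\ref{thm:oum_linking}, and the double application of that theorem to $(Q,R)$ and $(S,T)$ followed by pigeonhole on the index set $\{1,2,3\}$ (two out of three for each pair forces a common index) is precisely that consequence. Your check that $v\in F$ lies outside both $Q\cup R$ and $S\cup T$, so both applications are legitimate, is the only hypothesis verification needed.
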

\begin{proof}
  By Theorem~\ref{thm:oum_linking}, at least two graphs $H_1$, $H_2$ among  $G\setminus v$, 
  $G*v\setminus v$, and $G/v$ have the property that $\kappa_{H_1}(Q,R)=\kappa_{H_2}(Q,R)=k$.
  Again by Theorem~\ref{thm:oum_linking}, 
  at least one graph $H$ of $H_1$ or $H_2$ satisfies the property that 
  $\kappa_H(S,T)=\ell$. 
\end{proof}
\begin{corollary}
\label{cor:One}
Let $G$ be a graph and $Q$, $R$, $S$, and $T$ be subsets of $V(G)$ such that $Q\cap R=S\cap T=\emptyset$. Let $F$ be a subset of $V(G)-(Q\cup R\cup S\cup T)$, $k=\kappa_{G}(Q,R)$, and $\ell=\kappa_{G}(S,T)$. Then there exists a vertex-minor $H$ of $G$ such that $V(H)=V(G)-F$, $\kappa_{H}(Q,R)=k$, and $\kappa_{H}(S,T)=\ell$.
\end{corollary}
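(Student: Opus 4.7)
\emph{Proof plan.} The plan is to deduce Corollary~\ref{cor:One} from Corollary~\ref{cor:one} by a straightforward induction on $|F|$. The base case $|F|=0$ is immediate: take $H=G$, which is trivially a vertex-minor of itself and already satisfies $\kappa_{H}(Q,R)=k$ and $\kappa_{H}(S,T)=\ell$ by hypothesis.

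For the inductive step, assume the result holds whenever the distinguished set has fewer than $|F|$ vertices, and pick any vertex $v\in F$. Applying Corollary~\ref{cor:one} to $G$ at $v$ yields at least one operation among $G\setminus v$, $G\ast v\setminus v$, and $G\wedge uv\setminus v$ (for each neighbor $u$ of $v$) whose resulting graph, call it $G'$, satisfies both $\kappa_{G'}(Q,R)=k$ and $\kappa_{G'}(S,T)=\ell$. Note that $G'$ is a vertex-minor of $G$ with $V(G')=V(G)-\{v\}$, and that $F-\{v\}$ is a subset of $V(G')-(Q\cup R\cup S\cup T)$. Applying the induction hypothesis to $G'$ with the set $F-\{v\}$ then produces a vertex-minor $H$ of $G'$ with $V(H)=V(G')-(F-\{v\})=V(G)-F$ and the two connectivities preserved at $k$ and $\ell$. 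Since the vertex-minor relation is transitive, $H$ is a vertex-minor of $G$, as required.

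I do not expect any genuine obstacle in carrying out this argument: the heavy lifting is done by Corollary~\ref{cor:one}, which certifies that at every step we can remove one vertex of $F$ via a vertex-minor operation without dropping either connectivity below its target value, while Lemma~\ref{lem:kmonotone} ensures the connectivity never exceeds its starting value. The only subtlety is to verify that the selected operation really does give a vertex-minor on vertex set $V(G)-\{v\}$, which is built into Corollary~\ref{cor:one}.
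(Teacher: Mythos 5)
Your proof is correct: the induction on $|F|$, removing one vertex of $F$ at a time via Corollary~\ref{cor:one} and using transitivity of the vertex-minor relation, is exactly the routine iteration the paper has in mind when it leaves Corollary~\ref{cor:One} as an ``easy consequence'' of Theorem~\ref{thm:oum_linking}. No gap to report.
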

\begin{proof}
We proceed by induction on $|F|$. We may assume that $|F|\geq 1$. Let $v$ be a vertex of $F$. By Corollary~\ref{cor:one}, there is a graph $G_{1}\in \{G\setminus v, G*v\setminus v, G/v\}$ such that $\kappa_{G_{1}}(Q,R)=k$ and $\kappa_{G_{1}}(S,T)=\ell$. By the induction hypothesis, there is a vertex-minor $H$ of $G_{1}$ such that $V(H)=V(G_{1})-(F-\{v\})=V(G)-F$, $\kappa_{H}(Q,R)=\kappa_{G_{1}}(Q,R)=k$, and $\kappa_{H}(S,T)=\kappa_{G_{1}}(S,T)=\ell$. Therefore, the conclusion follows since $H$ is a vertex-minor of $G$.
\end{proof}

The following lemma is the analog of {\cite[Lemma 4.7]{Geelen2007}}.
\begin{lemma}
\label{lem:base}
Let $G$ be a graph and $S$ and $T$ be disjoint subsets of $V(G)$. Then there exist $S_{1}\subseteq S$ and $T_{1}\subseteq T$ such that $|S_{1}|=|T_{1}|=\kappa_{G}(S_{1},T_{1})=\kappa_{G}(S,T)$.
\end{lemma}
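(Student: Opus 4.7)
The plan is to reduce $|S|$ and $|T|$ separately using a submodular uncrossing argument, mimicking the matroidal proof of \cite[Lemma 4.7]{Geelen2007}. The key sub-claim is: if $\kappa_G(S,T)=k$ and $|S|>k$, then there exists $s\in S$ such that $\kappa_G(S-s,T)=k$. Once this is established, I iteratively delete elements of $S$ as long as $|S|>k$---noting that $|S|\geq k$ automatically from $\kappa_G(S,T)\leq\rho_G(S)\leq|S|$---to obtain $S_1\subseteq S$ with $|S_1|=k$ and $\kappa_G(S_1,T)=k$. Using the symmetry $\kappa_G(A,B)=\kappa_G(B,A)$, which follows from $\rho_G(X)=\rho_G(V(G)-X)$, the same sub-claim applied with the roles of the two arguments swapped lets me shrink $T$ to $T_1\subseteq T$ with $|T_1|=k$ and $\kappa_G(S_1,T_1)=k$, which is the desired conclusion.

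For the sub-claim, I argue by contradiction: suppose $\kappa_G(S-s,T)\leq k-1$ for every $s\in S$, and for each $s$ fix a minimizer $X_s$ with $S-s\subseteq X_s\subseteq V(G)-T$ and $\rho_G(X_s)\leq k-1$. First, $s\notin X_s$, for otherwise $S\subseteq X_s\subseteq V(G)-T$ would force $\rho_G(X_s)\geq\kappa_G(S,T)=k$. Enumerate $S=\{s_1,\ldots,s_m\}$ with $m=|S|>k$, and set $Y_j=X_{s_1}\cap\cdots\cap X_{s_j}$. I show by induction on $j$ that $\rho_G(Y_j)\leq k-j$; applying this at $j=m$ yields $\rho_G(Y_m)\leq k-m<0$, contradicting the nonnegativity of the cut-rank. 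The base case $j=1$ is immediate from the choice of $X_{s_1}$. For the inductive step, I verify that $S\subseteq Y_{j-1}\cup X_{s_j}$: each $s_i$ with $i<j$ satisfies $s_i\in S-s_j\subseteq X_{s_j}$, while each $s_i$ with $i\geq j$ satisfies $s_i\in\bigcap_{\ell<j}(S-s_\ell)\subseteq Y_{j-1}$ because $\ell<j\leq i$ forces $s_i\neq s_\ell$. Combined with $Y_{j-1}\cup X_{s_j}\subseteq V(G)-T$, this gives $\rho_G(Y_{j-1}\cup X_{s_j})\geq k$, and submodularity (Lemma~\ref{lem:subeq}) then yields
\[\rho_G(Y_j)\leq \rho_G(Y_{j-1})+\rho_G(X_{s_j})-\rho_G(Y_{j-1}\cup X_{s_j})\leq (k-j+1)+(k-1)-k=k-j.\]

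The main step is the uncrossing argument for the sub-claim; once it is in place, the iterative reductions of $S$ and then $T$ are routine bookkeeping, as is the final size check $|S_1|=|T_1|=k$. The subtlety requiring care is tracking, at each inductive stage, that $S\subseteq Y_{j-1}\cup X_{s_j}$ even though each individual $X_{s_i}$ omits the corresponding $s_i$; the indexing above handles this by letting the accumulated intersection $Y_{j-1}$ cover the missing element $s_j$ of $X_{s_j}$.
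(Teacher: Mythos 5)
Your proof is correct, but it takes a different route from the paper. The paper's proof is a two-line appeal to matroid theory: by Lemma~\ref{lem:subconn}, the function $X\mapsto\kappa_G(X,T)$ is the rank function of a matroid on $V(G)-T$, so one takes a maximal independent subset $S_1$ of $S$ (giving $|S_1|=\kappa_G(S_1,T)=\kappa_G(S,T)$), and then repeats the same step with the matroid induced by $\kappa_G(\cdot,S_1)$ to extract $T_1$ from $T$. You instead never form these matroids and never use Lemma~\ref{lem:subconn}: you prove the single-element removal statement (if $|S|>k=\kappa_G(S,T)$ then some $s$ has $\kappa_G(S-s,T)=k$) directly from the submodularity of the cut-rank (Lemma~\ref{lem:subeq}) via an iterated uncrossing of the minimizers $X_{s_1},\dots,X_{s_m}$, driving $\rho_G\bigl(\bigcap_{i\le j}X_{s_i}\bigr)$ below zero; your index bookkeeping showing $S\subseteq Y_{j-1}\cup X_{s_j}$, the bound $\rho_G(Y_{j-1}\cup X_{s_j})\ge k$, and the final count $|S_1|\ge\kappa_G(S_1,T)$ are all sound, as is the symmetric pass over $T$ using $\kappa_G(A,B)=\kappa_G(B,A)$ (mirroring the paper's second matroid step, which also uses $S_1$ rather than $S$). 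In effect you reprove by hand the matroid fact that a dependent set contains a removable element; the paper's argument is shorter because it quotes the induced matroid wholesale, while yours is more self-contained, relying only on Lemma~\ref{lem:subeq} and elementary properties of $\rho_G$.
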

\begin{proof}
By Lemma~\ref{lem:subconn}, there exists a matroid $M_{1}$ on $V(G)-T$ whose rank function is 
$\kappa_{G}(X,T)$ for each subset $X$ of $V(G)-T$. Let $S_{1}$ be a maximal independent set of $M_{1}$ contained in $S$. Then we have $|S_{1}|=\kappa_{G}(S_{1},T)=\kappa_{G}(S,T)$. By Lemma~\ref{lem:subconn}, there is a matroid $M_{2}$ on $V(G)-S_{1}$ whose rank function is $\kappa_{G}(X,S_{1})$ for every subset $X$ of $V(G)-S_{1}$. Let $T_{1}$ be a maximal independent set of $M_{2}$ contained in $T$. Then $|T_{1}|=\kappa_{G}(T_{1},S_{1})=\kappa_{G}(T,S_{1})$ and so we finish the proof.
\end{proof}

\section{Lemmas on the cut-rank function.}
\label{sec:pf}
In this section, we present simple lemmas on the cut-rank function.
A subset $X$ of $V(G)$ is an \emph{$(S,T)$-separating set of order $k$} in $G$ if $S\subseteq X\subseteq  V(G)-T$ and $\rho_{G}(X)=k$.

For a graph $G$ and disjoint subsets $S$, $T$ of $V(G)$, let $\tilde\sqcap_{G}[S,T]=\frac{1}{2}(\rho_{G}(S)+\rho_{G}(T)-\rho_{G}(S\cup T))$. 

\begin{lemma}
\label{lem:capcup}
Let $G$ be a graph and $S$ and $T$ be disjoint subsets of $V(G)$. 
If $A$ and $B$ are $(S,T)$-separating sets of order $k:=\kappa_{G}(S,T)$ in $G$, then both $A\cap B$ and $A\cup B$ are $(S,T)$-separating sets of order $k$ in $G$.
\end{lemma}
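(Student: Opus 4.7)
The plan is a direct submodularity argument using Lemma~\ref{lem:subeq}. First I would check that $A\cap B$ and $A\cup B$ are indeed $(S,T)$-separating sets at all: since $S\subseteq A$, $S\subseteq B$, and $A,B\subseteq V(G)-T$, we immediately get $S\subseteq A\cap B\subseteq A\cup B\subseteq V(G)-T$, so both sets lie in the range over which the minimum defining $\kappa_G(S,T)$ is taken.

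Next, by the very definition of $\kappa_G(S,T)=k$ as that minimum, we have the two lower bounds $\rho_G(A\cap B)\geq k$ and $\rho_G(A\cup B)\geq k$. On the other hand, Lemma~\ref{lem:subeq} applied to $A$ and $B$ gives
\[
\rho_G(A\cap B)+\rho_G(A\cup B)\leq \rho_G(A)+\rho_G(B)=k+k=2k,
\]
using the hypothesis $\rho_G(A)=\rho_G(B)=k$. Combining the two inequalities forces equality, so $\rho_G(A\cap B)=\rho_G(A\cup B)=k$, which is the conclusion.

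There is no real obstacle here; the only thing to be careful about is confirming membership of $A\cap B$ and $A\cup B$ in the class of $(S,T)$-separating sets so that the minimality lower bound $\geq k$ legitimately applies before the submodular upper bound $\leq 2k$ is used to squeeze both values to $k$.
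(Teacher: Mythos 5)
Your proof is correct and is essentially the paper's own argument: both note that $A\cap B$ and $A\cup B$ are $(S,T)$-separating (so $\rho_G\ge k$ on each) and then squeeze with the submodularity of $\rho_G$ from Lemma~\ref{lem:subeq}. Your extra sentence verifying $S\subseteq A\cap B\subseteq A\cup B\subseteq V(G)-T$ just makes explicit what the paper leaves implicit.
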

\begin{proof}
  Since both $A\cap B$ and $A\cup B$ are $(S,T)$-separating sets, $\rho_{G}(A\cap B)\ge k$ and $\rho_{G}(A\cup B)\ge k$.
By Lemma~\ref{lem:subeq}, 
\[
2k=\rho_{G}(A)+\rho_{G}(B)\geq\rho_{G}(A\cup B)+\rho_{G}(A\cap B)\ge 2k
\] 
and therefore $\rho_{G}(A\cup B)=\rho_{G}(A\cap B)=k$.
\end{proof}

\begin{lemma}
\label{lem:nonflex}
Let $G$ be a graph and $S$ and $T$ be disjoint subsets of $V(G)$ such that $\rho_G(S)=\kappa_G(S,T)$. Let $U$ be a subset of $S$. Let $v$ be a vertex in $V(G)-(S\cup T)$. If $\kappa_{G\setminus v}(U,T)<\kappa_{G}(U,T)$, then $\kappa_{G\setminus v}(S,T)<\kappa_{G}(S,T)$.
\end{lemma}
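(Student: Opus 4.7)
The plan is to translate the failure of connectivity for $(U,T)$ into a failure for $(S,T)$ by taking a union with $S$ and using submodularity. Set $k=\kappa_G(S,T)=\rho_G(S)$ and $m=\kappa_G(U,T)$. By hypothesis there is some $X$ with $U\subseteq X\subseteq V(G)-T-\{v\}$ and $\rho_{G\setminus v}(X)\le m-1$. The candidate witness for the conclusion will be $X\cup S$, which contains $S$ and avoids both $T$ and $v$, so it is automatically an $(S,T)$-separating set of $G\setminus v$.

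The main estimate will come from Lemma~\ref{lem:subtool}\ref{item:s2} applied with this $X$ and with $Y=S$:
\[
\rho_{G\setminus v}(X)+\rho_{G}(S)\ \geq\ \rho_{G}(X\cap S)+\rho_{G\setminus v}(X\cup S).
\]
The left side is at most $(m-1)+k$ by the choice of $X$ and the hypothesis $\rho_G(S)=k$. For the first term on the right, note that $X\cap S$ satisfies $U\subseteq X\cap S\subseteq V(G)-T$, so $\rho_G(X\cap S)\ge\kappa_G(U,T)=m$. Rearranging yields $\rho_{G\setminus v}(X\cup S)\le k-1$, and since $X\cup S$ is an $(S,T)$-separating set of $G\setminus v$, this gives $\kappa_{G\setminus v}(S,T)\le k-1<\kappa_G(S,T)$.

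There is no real obstacle here beyond identifying which submodular inequality to apply and recognizing that the hypothesis $\rho_G(S)=\kappa_G(S,T)$ is exactly what is needed to swap $\rho_G(S)$ for $k$ on the right-hand side; without it, one would only be able to bound $\rho_{G\setminus v}(X\cup S)$ by $\rho_G(S)-1$, which would be too weak. The inequality $\kappa_G(U,T)\le\kappa_G(S,T)$, which makes $m\le k$ and so keeps the argument consistent, follows by taking $S$ itself as a $(U,T)$-separating set.
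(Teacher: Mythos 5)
Your proof is correct and uses exactly the same key step as the paper: Lemma~\ref{lem:subtool}\ref{item:s2} applied with $Y=S$, bounding $\rho_G(X\cap S)$ below by $\kappa_G(U,T)$ and using $\rho_G(S)=\kappa_G(S,T)$ to conclude about $X\cup S$. The only difference is presentational — you argue directly from a witness $X$ with $\rho_{G\setminus v}(X)\le\kappa_G(U,T)-1$, while the paper runs the same inequality inside a proof by contradiction.
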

\begin{proof}
  Let $k=\rho_G(S)=\kappa_G(S,T)$.
  Suppose that $\kappa_{G\setminus v}(S,T)=k$. 
  Let $X$ be a $(U,T)$-separating set in $G\setminus v$. 
  By~\ref{item:s2} of Lemma~\ref{lem:subtool},
  \[
  \rho_{G\setminus v}(X)+\rho_G(S)
  \ge \rho_G(X\cap S)+\rho_{G\setminus v}(X\cup S)
  \] 
  and since $X\cup S$ is $(S,T)$-separating in $G\setminus v$, 
  we have $\rho_{G\setminus v}(X\cup S)\ge k=\rho_G(S)$. 
  Hence, we deduce that $\rho_{G\setminus v}(X)\ge 
  \rho_{G}(X\cap S)\ge \kappa_G(U,T)$.
  So $\kappa_{G\setminus v}(U,T)\ge\kappa_{G}(U,T)$, contradicting the assumption.
\end{proof}

\begin{lemma}
\label{lem:conn}
Let $G$ be a graph and $X_{2}$ and $Y$ be disjoint subsets of $V(G)$. Let $X_{1}$ be a subset of $X_{2}$. Then $\tilde\sqcap_{G}[X_{1},Y]\leq\tilde\sqcap_{G}[X_{2},Y]$.
\end{lemma}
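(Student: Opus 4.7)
The plan is to unfold the definition of $\tilde\sqcap$ and recognize the desired inequality as an instance of submodularity. Writing out $\tilde\sqcap_{G}[X_{1},Y]\le\tilde\sqcap_{G}[X_{2},Y]$ and clearing the common $\rho_G(Y)$ and the factor $\tfrac{1}{2}$, the statement becomes
\[
\rho_{G}(X_{1})+\rho_{G}(X_{2}\cup Y)\le\rho_{G}(X_{2})+\rho_{G}(X_{1}\cup Y).
\]

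To prove this, I would apply Lemma~\ref{lem:subeq} to the pair $A=X_{2}$ and $B=X_{1}\cup Y$. The key computation is that, because $X_{1}\subseteq X_{2}$ and $X_{2}\cap Y=\emptyset$, we have
\[
A\cap B=X_{2}\cap(X_{1}\cup Y)=(X_{2}\cap X_{1})\cup(X_{2}\cap Y)=X_{1},
\]
while $A\cup B=X_{2}\cup Y$. Submodularity then gives exactly the displayed inequality, and rearranging recovers $\tilde\sqcap_{G}[X_{1},Y]\le\tilde\sqcap_{G}[X_{2},Y]$.

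There is no real obstacle here; the lemma is a one-line consequence of submodularity once one spots the right choice of sets. The only thing to be careful about is the disjointness hypothesis $X_{2}\cap Y=\emptyset$, which is what makes the intersection collapse to $X_{1}$ rather than $X_{1}\cup(X_{2}\cap Y)$.
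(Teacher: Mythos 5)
Your proof is correct and is essentially identical to the paper's: both apply the submodularity of the cut-rank function (Lemma~\ref{lem:subeq}) to the pair $X_{2}$ and $X_{1}\cup Y$, using $X_{1}\subseteq X_{2}$ and $X_{2}\cap Y=\emptyset$ to identify the intersection with $X_{1}$ and the union with $X_{2}\cup Y$. Nothing further is needed.
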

\begin{proof}
Since $X_{1}\subseteq X_{2}$, by Lemma~\ref{lem:subeq}, we have 
\begin{align*}
\rho_{G}(X_{2})+\rho_{G}(X_{1}\cup Y)&\geq\rho_{G}(X_{2}\cup(X_{1}\cup Y))+\rho_{G}(X_{2}\cap(X_{1}\cup Y)) \\
&=\rho_{G}(X_{2}\cup Y)+\rho_{G}(X_{1}).
\end{align*}
Hence, $2\tilde\sqcap_{G}(X_{1},Y)=\rho_{G}(X_{1})+\rho_{G}(Y)-\rho_{G}(X_{1}\cup Y)\leq \rho_{G}(X_{2})+\rho_{G}(Y)-\rho_{G}(X_{2}\cup Y)=2\tilde\sqcap_{G}(X_{2},Y)$.
\end{proof}

\begin{lemma}
\label{lem:qset}
Let $G$ be a graph and $Q$ and $R$ be disjoint subsets of $V(G)$ such that $\rho_{G}(Q)=\kappa_{G}(Q,R)$. Let $v$ be a vertex of $V(G)-(Q\cup R)$ such that $\kappa_{G\setminus v}(Q,R)<\kappa_{G}(Q,R)$. Then the following hold:
\begin{enumerate}[label=\rm(Q\arabic*)]
\item\label{item:q1} $\rho_{G}(Q\cup\{v\})\geq\rho_{G}(Q)$.
\item\label{item:q2} If $\rho_{G\setminus v}(Q)=\rho_{G}(Q)$, then $\rho_{G}(Q\cup\{v\})=\rho_{G}(Q)+1$.
\end{enumerate}
\end{lemma}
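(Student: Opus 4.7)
The plan is to derive both (Q1) and (Q2) from a single submodular inequality obtained by applying \ref{item:s1} of Lemma~\ref{lem:subtool} to $Q$ and a minimum $(Q,R)$-separating set in $G\setminus v$. Write $k=\rho_G(Q)=\kappa_G(Q,R)$. Since $\kappa_{G\setminus v}(Q,R)<k$, one can pick $X$ with $Q\subseteq X\subseteq V(G)-R-\{v\}$ and $\rho_{G\setminus v}(X)\le k-1$. Because $v\notin R$, the set $X\cup\{v\}$ is $(Q,R)$-separating in $G$, so $\rho_G(X\cup\{v\})\ge k$.

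Now I would apply \ref{item:s1} of Lemma~\ref{lem:subtool} with this $X$ and with $Y=Q$. Using $Q\subseteq X$, one has $X\cap Q=Q$ and $X\cup Q=X$, and the inequality becomes
\[
\rho_{G\setminus v}(X)+\rho_G(Q\cup\{v\})\ \ge\ \rho_{G\setminus v}(Q)+\rho_G(X\cup\{v\}).
\]
Plugging in the bounds $\rho_{G\setminus v}(X)\le k-1$ and $\rho_G(X\cup\{v\})\ge k$ gives
\[
\rho_G(Q\cup\{v\})\ \ge\ \rho_{G\setminus v}(Q)+1.
\]
This is the single inequality that drives everything.

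For \ref{item:q1}, I would combine the boxed bound above with \ref{item:2.4i} of Lemma~\ref{lem:delrank}, which gives $\rho_{G\setminus v}(Q)\ge \rho_G(Q)-1=k-1$. Hence $\rho_G(Q\cup\{v\})\ge k=\rho_G(Q)$, as required. For \ref{item:q2}, the hypothesis $\rho_{G\setminus v}(Q)=\rho_G(Q)=k$ upgrades the boxed bound to $\rho_G(Q\cup\{v\})\ge k+1$, while \ref{item:2.4ii} of Lemma~\ref{lem:delrank} gives the matching upper bound $\rho_G(Q\cup\{v\})\le \rho_{G\setminus v}(Q)+1=k+1$, yielding equality.

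There is really no hard step here; the only thing to be careful about is choosing $X$ so that it avoids $v$ (which is automatic since we minimize in $G\setminus v$) and then correctly reading off $X\cap Q=Q$ when invoking \ref{item:s1} of Lemma~\ref{lem:subtool}. The slight subtlety worth flagging is that the hypothesis $\rho_G(Q)=\kappa_G(Q,R)$ is used only to identify the lower bound $k$ on $\rho_G(X\cup\{v\})$ with the quantity $\rho_G(Q)$ that appears in the conclusions.
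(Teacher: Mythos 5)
Your proof is correct and follows essentially the same route as the paper: the key step for \ref{item:q2} is the identical application of \ref{item:s1} of Lemma~\ref{lem:subtool} with $Y=Q$ and a separating set $X$ witnessing $\kappa_{G\setminus v}(Q,R)\le k-1$, combined with Lemma~\ref{lem:delrank}. The only cosmetic difference is that the paper dispatches \ref{item:q1} immediately from the observation that $Q\cup\{v\}$ is a $(Q,R)$-separating set (so $\rho_G(Q\cup\{v\})\ge\kappa_G(Q,R)=\rho_G(Q)$), whereas you rederive it from the same submodular inequality plus \ref{item:2.4i}; both are fine.
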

\begin{proof}
\ref{item:q1} holds clearly since $\rho_{G}(Q)=\kappa_{G}(Q,R)$.

To prove~\ref{item:q2}, let $k=\kappa_{G}(Q,R)$. Since $\kappa_{G\setminus v}(Q,R)<k$, there is a subset $X$ of $V(G)$ such that $Q\subseteq X\subseteq V(G)-(R\cup\{v\})$ and $\rho_{G\setminus v}(X)\leq k-1$. Then $\rho_{G\setminus v}(X)<k\leq\rho_{G}(X\cup\{v\})$ because $Q\subseteq X\cup\{v\}\subseteq V(G)-R$ and by~\ref{item:s1} of Lemma~\ref{lem:subtool}, we have that 
\[
\rho_{G\setminus v}(X)+\rho_{G}(Q\cup\{v\})\geq\rho_{G\setminus v}(Q)+\rho_{G}(X\cup\{v\})>\rho_{G\setminus v}(Q)+\rho_{G\setminus v}(X).
\]
Hence, by Lemma~\ref{lem:delrank}, $\rho_{G}(Q\cup\{v\})=\rho_{G\setminus v}(Q)+1=\rho_{G}(Q)+1$.
\end{proof}

\section{Proof of Theorem~\ref{thm:main}}
\label{sec:mainpf}

For disjoint subsets $S$ and $T$ of vertices of a graph $G$, a vertex $v\in V(G)-(S\cup T)$ is \emph{$(S,T)$-flexible} if $\kappa_{G\setminus v}(S,T)=\kappa_{G*v\setminus v}(S,T)=\kappa_{G\wedge uv\setminus v}(S,T)=\kappa_{G}(S,T)$ for each $u\in N_{G}(v)$. Note that every isolated vertex is $(S,T)$-flexible. 

\begin{lemma}
\label{lem:maintain_flexible}
Let $S$, $T$ be disjoint sets of vertices of a graph $G$. If a vertex $v$ is $(S,T)$-flexible in $G$, 
then it is $(S,T)$-flexible in every graph locally equivalent to $G$.
\end{lemma}
\begin{proof}
  Let $G'$ be a graph locally equivalent to $G$.
Let $k=\kappa_{G}(S,T)$, $G_{1}=G\setminus v$, $G_{2}=G*v\setminus v$, and $G_{3}=G/v$. Since $v$ is $(S,T)$-flexible in $G$, we have $\kappa_{G_{1}}(S,T)=\kappa_{G_{2}}(S,T)=\kappa_{G_{3}}(S,T)=k$. Let $H_{1}=G'\setminus v$, $H_{2}=G'*v\setminus v$, and $H_{3}=G'/v$. Then by Lemma~\ref{lem:perm}, there is a permutation $\sigma:\{1,2,3\}\rightarrow\{1,2,3\}$ such that $H_{i}$ is locally equivalent to $G_{\sigma(i)}$ for each $i\in\{1,2,3\}$. Hence, by Lemma~\ref{lem:local}, we have $\kappa_{H_{i}}(S,T)=\kappa_{G_{\sigma(i)}}(S,T)=k$ for each $i\in\{1,2,3\}$. Therefore, $v$ is $(S,T)$-flexible in $G'$.
\end{proof}

The following lemma finds a nested set of $(S,T)$-separating sets of order $\kappa_{G}(S,T)$ for disjoint sets $S$ and $T$ of vertices of a graph $G$.

\begin{lemma}
\label{lem:seqsep}
Let $G$ be a graph and $S$ and $T$ be disjoint subsets of $V(G)$. Let $k=\kappa_{G}(S,T)$ and $F\subseteq V(G)-(S\cup T)$ be a set of $n$ vertices which are not $(S,T)$-flexible. Then there exist an ordering $f_{1},\ldots,f_{n}$ of vertices in $F$ and a sequence $A_{1},\ldots,A_{n}$ of $(S,T)$-separating sets of order $k$ in $G$ such that the following hold:
\begin{enumerate}[label=\rm(\roman*)]
\item\label{item:a1} $A_{i}\subseteq A_{i+1}$ for each $1\leq i\leq n-1$.
\item\label{item:a2} $A_{i}\cap F=\{f_{1},\ldots,f_{i}\}$ for each $1\leq i\leq n$.
\end{enumerate}
\end{lemma}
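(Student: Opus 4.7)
I would proceed by induction on $n=|F|$, where the base case $n=0$ is trivial. For the inductive step, the main task is to find $f_1 \in F$ and a minimum $(S,T)$-separating set $A_1$ of order $k$ with $A_1 \cap F = \{f_1\}$. Once this is done, the remainder follows by applying the induction hypothesis to the triple $(A_1, T, F\setminus\{f_1\})$: note $\kappa_G(A_1, T) = \rho_G(A_1) = k$, and for each $f \in F\setminus\{f_1\}$, Lemma~\ref{lem:nonflex} (applied in turn to $G$, to $G*f$, and to $G\wedge wf$ for each neighbor $w$ of $f$, which all share the same cut-rank function as $G$ by Lemma~\ref{lem:local}) transfers non-$(S,T)$-flexibility to non-$(A_1,T)$-flexibility. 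The resulting chain $A_2 \subseteq \cdots \subseteq A_n$ consists of minimum $(A_1, T)$-separators of order $k$, hence of minimum $(S,T)$-separators containing $A_1 \supseteq \{f_1\}$; so $A_1 \subseteq A_2 \subseteq \cdots \subseteq A_n$ with $A_i \cap F = \{f_1,\ldots,f_i\}$.

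To produce $f_1$ and $A_1$, I first note that every non-flexible $v \in F$ lies in some minimum $(S,T)$-separator. Indeed, suppose $v$ lies in none; then for each $(S,T)$-separator $Y$ of $G\setminus v$, the set $Y \cup \{v\}$ is an $(S,T)$-separator of $G$ containing $v$ and hence not a minimum one, so $\rho_G(Y \cup \{v\}) \geq k+1$, and Lemma~\ref{lem:delrank} gives $\rho_{G\setminus v}(Y) \geq k$. Repeating this in the locally equivalent graphs $G*v$ and $G\wedge wv$, whose cut-rank functions and minimum $(S,T)$-separators coincide with those of $G$ by Lemma~\ref{lem:local}, shows all three conditions of Theorem~\ref{thm:oum_linking} hold for $v$, contradicting $v \in F$. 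Now Lemma~\ref{lem:capcup} implies that, for each $v \in F$, the non-empty family of minimum $(S,T)$-separators containing $v$ is closed under intersection and so has a unique smallest member $A_v$. Choose $\bar v \in F$ minimizing $|A_{\bar v} \cap F|$; I claim $|A_{\bar v} \cap F| = 1$, in which case $f_1 = \bar v$ and $A_1 = A_{\bar v}$ work. Otherwise there is $u \in (A_{\bar v} \cap F) \setminus \{\bar v\}$, and since $u \in A_{\bar v}$ lies in every minimum separator containing $\bar v$, we get $A_u \subseteq A_{\bar v}$; the minimality of $|A_{\bar v} \cap F|$ forces $A_u \cap F = A_{\bar v} \cap F$, whence $\bar v \in A_u$ and by the symmetric argument $A_u = A_{\bar v}$. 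Therefore $u$ and $\bar v$ lie in exactly the same minimum $(S,T)$-separators.

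The main obstacle is to contradict this equivalence by showing $u$ must then be $(S,T)$-flexible. By Lemma~\ref{lem:local}, each $H \in \{G,\, G*u\} \cup \{G\wedge wu : w \in N_G(u)\}$ has $\rho_H = \rho_G$ and the same minimum $(S,T)$-separators, so $u$ and $\bar v$ remain equivalent among these. Fix such an $H$ and let $Y$ be any $(S,T)$-separator of $H\setminus u$. If $\bar v \notin Y$, then $Y \cup \{u\}$ is an $(S,T)$-separator of $H$ containing $u$ but not $\bar v$, so it is not a minimum separator, giving $\rho_H(Y \cup \{u\}) \geq k+1$ and hence $\rho_{H\setminus u}(Y) \geq k$ by Lemma~\ref{lem:delrank}. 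If instead $\bar v \in Y$, then $Y$ itself is an $(S,T)$-separator of $H$ containing $\bar v$ but not $u$, so it is not a minimum separator, giving $\rho_H(Y) \geq k+1$ and again $\rho_{H\setminus u}(Y) \geq k$. Combined with Lemma~\ref{lem:kmonotone}, this shows $\kappa_{G\setminus u}(S,T) = \kappa_{G*u\setminus u}(S,T) = \kappa_{G\wedge wu\setminus u}(S,T) = k$ for every neighbor $w$ of $u$, contradicting the non-flexibility of $u$.
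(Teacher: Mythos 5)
Your proof is correct, and its overall skeleton matches the paper's: induct on $|F|$, produce one $(S,T)$-separating set $A_1$ of order $k$ with $A_1\cap F=\{f_1\}$, then recurse on the pair $(A_1,T)$ after transferring non-flexibility with Lemmas~\ref{lem:local} and~\ref{lem:nonflex}; the reassembly of the chain is identical. Where you genuinely diverge is the central claim that such an $A_1$ exists. The paper picks, for each $v\in F$, a minimum-cardinality order-$k$ separating set $A_v$ containing $v$, takes $x$ minimizing $|A_x|$, and, given a second vertex $y\in A_x\cap F$, applies \ref{item:s1} of Lemma~\ref{lem:subtool} to a witness of non-flexibility of $y$ to show that $A_y-\{y\}$ is again an order-$k$ separating set, contradicting minimality. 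You instead take $A_v$ to be the unique smallest such set (the intersection of all of them, legitimate by Lemma~\ref{lem:capcup}), minimize $|A_{\bar v}\cap F|$, deduce that a second vertex $u$ would lie in exactly the same order-$k$ separating sets as $\bar v$, and then contradict the non-flexibility of $u$ directly: in each of $G$, $G*u$, $G\wedge wu$, every candidate set, after restoring $u$ when needed, contains exactly one of $u,\bar v$, hence has cut-rank at least $k+1$, so Lemma~\ref{lem:delrank} forces all three reduced connectivities to stay at $k$. Your route avoids the submodular inequality \ref{item:s1} at this step and makes the contradiction conceptually cleaner ($u$ would be flexible), while the paper's extremal choice needs only one minimum-size separator per vertex at the cost of one application of \ref{item:s1}. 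One cosmetic remark: in your preliminary claim the contradiction is with the hypothesis that $v$ is not $(S,T)$-flexible (all three connectivities are preserved), not with Theorem~\ref{thm:oum_linking} as such.
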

\begin{proof}
We prove by induction on $n=|F|$. We may assume that $n\geq 1$. We first claim that for every $v\in F$, there exists an $(S,T)$-separating set of order $k$ in $G$ containing $v$. 
Since $v$ is not $(S,T)$-flexible in $G$, there exists a graph $G'\in\{G\setminus v, G*v\setminus v, G/v\}$ such that $\kappa_{G'}(S,T)<\kappa_{G}(S,T)$. So there is a subset $A$ of $V(G)-\{v\}$ such that $S\subseteq A\subseteq V(G)-(T\cup\{v\})$ and $\rho_{G'}(A)\leq k-1$. There exists a graph $H$ locally equivalent to $G$ such that $H\setminus v=G'$. Therefore, since $S\subseteq A\cup\{v\}\subseteq V(G)-T$, by Lemmas~\ref{lem:local} and \ref{lem:delrank}, we have $k\leq\rho_{G}(A\cup\{v\})=\rho_{H}(A\cup\{v\})\leq\rho_{H\setminus v}(A)+1=\rho_{G'}(A)+1\leq k$ and so $\rho_{G}(A\cup\{v\})=k$. Now it follows that $A\cup\{v\}$ is an $(S,T)$-separating set of order $k$ in $G$ containing $v$.

For each $u\in F$, let $A_{u}$ be an $(S,T)$-separating set of order $k$ in $G$ containing $u$ such that $|A_{u}|$ is minimum. Let $x$ be a vertex of $F$ such that $|A_{x}|\leq |A_{u}|$ for each $u\in F$. 

Now we claim that $A_{x}\cap F=\{x\}$.
Suppose that there exists an element $y\in (A_{x}-\{x\})\cap F$. 
Then, by Lemma~\ref{lem:capcup}, both $A_{x}\cap A_{y}$ and $A_{x}\cup A_{y}$ are $(S,T)$-separating sets of order $k$ in $G$.
Hence, $A_{y}\subseteq A_{x}$ by the choice of $A_{y}$. Then we have $A_{x}=A_{y}$ because $|A_{x}|\leq|A_{u}|$ for every $u\in F$. Since $y$ is not $(S,T)$-flexible, there exists a graph $G''\in\{G\setminus y, G*y\setminus y, G/y\}$ such that 
$\kappa_{G''}(S,T)<\kappa_{G}(S,T)$. By Lemma~\ref{lem:local}, we may assume that $G''=G\setminus y$. Then there exists $S\subseteq X\subseteq V(G)-(T\cup\{y\})$ such that $\rho_{G\setminus y}(X)=k-1$. By Lemma~\ref{lem:delrank}, $\rho_{G}(X)=k$ and $\rho_{G}(X\cup\{y\})=k$.
So $X\cup\{y\}$ is an $(S,T)$-separating set of order $k$ in $G$ containing $y$. By Lemma~\ref{lem:capcup}, $A_{y}\cap(X\cup\{y\})$ is an $(S,T)$-separating set of order $k$ in $G$. Therefore, by the choice of $A_{y}$, we have $A_{y}\subseteq X\cup\{y\}$ and so $A_{y}-\{y\}\subseteq X$. By applying \ref{item:s1} of Lemma~\ref{lem:subtool}, 
\begin{align*}
2k-1&=\rho_{G\setminus y}(X)+\rho_{G}(A_{y})=\rho_{G\setminus y}(X)+\rho_{G}((A_{y}-\{y\})\cup\{y\}) \\
&\geq\rho_{G\setminus y}(X\cap(A_{y}-\{y\}))+\rho_{G}(X\cup(A_{y}-\{y\})\cup\{y\}) \\
&=\rho_{G\setminus y}(A_{y}-\{y\})+\rho_{G}(X\cup\{y\}) .
\end{align*}
Since $\rho_{G}(X\cup\{y\})=k$, we know that $\rho_{G\setminus y}(A_{y}-\{y\})\leq k-1$ and so $\rho_{G}(A_{y}-\{y\})\leq k$ by Lemma~\ref{lem:delrank}. Recall that $S\subseteq A_{y}-\{y\}\subseteq V(G)-T$ and $k=\kappa_{G}(S,T)$. Therefore, $\rho_{G}(A_{y}-\{y\})=k$. Since $A_{x}=A_{y}$, this is a contradiction to the minimality of $A_{x}$. Thus $A_{x}\cap F=\{x\}$.

Let $f_{1}=x$ and $A_{1}=A_{x}$. Then $k=\kappa_{G}(S,T)\leq\kappa_{G}(A_{1},T)\leq\rho_{G}(A_{1})=k$ and therefore we have that $\kappa_{G}(A_{1},T)=k$. By Lemmas~\ref{lem:local} and \ref{lem:nonflex}, no vertex of $F-\{f_{1}\}$ is $(A_{1},T)$-flexible. Hence, by the induction hypothesis, there exist an ordering $f_{2},\ldots,f_{n}$ of elements of $F-\{f_{1}\}$ and a sequence $A_{2},\ldots,A_{n}$ of $(A_{1},T)$-separating sets of order $k$ in $G$ such that \ref{item:a1} and \ref{item:a2} hold.

So we finish the proof with the fact that $A_{2},\ldots,A_{n}$ are also $(S,T)$-separating sets of order $k$ in~$G$.
\end{proof}

Our proof of Theorem~\ref{thm:main} consists of two parts. In the first part, we will assume that 
$S$ and $T$ are small and prove the theorem. In the second part, we will show how to reduce the size of $S$ and $T$. The following lemma will be used at the key step in the first part.

\begin{lemma}
\label{lem:nesting}
Let $G$ be a graph and $Q$, $R$, $S$, and $T$ be subsets of $V(G)$ such that $Q\cap R=S\cap T=\emptyset$ and $S\cup T\subseteq Q\cup R$. Let $F=V(G)-(Q\cup R)\neq\emptyset$ and $k=\kappa_{G}(Q,R)$ and $\ell=\kappa_{G}(S,T)$. If $\rho_{G}(Q)=\rho_{G}(R)=k$ and no vertex of $F$ is $(Q,R)$-flexible or $(S,T)$-flexible, then (1) or (2) holds:
\begin{enumerate}[label=\rm(\arabic*)]
\item\label{item:ext1} There exists a vertex $v$ of $F$ such that at least two of the following hold:
\begin{enumerate}[label=\rm(\roman*)]
\item $\kappa_{G\setminus v}(Q,R)=k$ and $\kappa_{G\setminus v}(S,T)=\ell$.
\item $\kappa_{G*v\setminus v}(Q,R)=k$ and $\kappa_{G*v\setminus v}(S,T)=\ell$.
\item $\kappa_{G\wedge uv\setminus v}(Q,R)=k$ and $\kappa_{G\wedge uv\setminus v}(S,T)=\ell$ for each $u\in N_{G}(v)$.
\end{enumerate}
\item\label{item:ext2} There exist disjoint subsets $Q'$ and $R'$ of $V(G)$ such that the following hold:
\begin{enumerate}[label=\rm(\roman*)]
\item\label{item:1} $Q\subseteq Q'$, $R\subseteq R'$ and $\rho_{G}(Q')=\rho_{G}(R')=k$.
\item\label{item:2} $\tilde\sqcap_{G}[Q',R']\geq\tilde\sqcap_{G}[Q,R]+\frac{1}{2}$.
\item\label{item:3} $\left|V(G)-(Q'\cup R')\right|\geq\lfloor\frac{1}{2}|F|\rfloor$.
\end{enumerate}
\end{enumerate}
\end{lemma}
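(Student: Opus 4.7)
The plan is to apply Lemma~\ref{lem:seqsep} twice to produce two symmetric chains of separating sets for the pair $(Q,R)$, and once more for $(S,T)$, then either pull out a vertex witnessing \ref{item:ext1} or use the chains to build the extensions $Q'$, $R'$. Concretely, since no vertex of $F$ is $(Q,R)$-flexible, Lemma~\ref{lem:seqsep} applied to $(Q,R)$ gives an ordering $f_1,\ldots,f_n$ of $F$ and nested $(Q,R)$-separating sets $A_1\subseteq\cdots\subseteq A_n$, each of order $k$, with $A_i\cap F=\{f_1,\ldots,f_i\}$; in particular $A_n=Q\cup F$. The hypothesis $\rho_G(R)=k$ makes $V(G)-Q$ an $(R,Q)$-separating set of order $k$, so applying Lemma~\ref{lem:seqsep} to $(R,Q)$ produces, symmetrically, an ordering $g_1,\ldots,g_n$ of $F$ and nested $(R,Q)$-separating sets $B_1\subseteq\cdots\subseteq B_n$ of order $k$ with $B_i\cap F=\{g_1,\ldots,g_i\}$. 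Similarly, using that no vertex of $F$ is $(S,T)$-flexible, apply Lemma~\ref{lem:seqsep} to $(S,T)$ to obtain nested $(S,T)$-separating sets $C_1\subseteq\cdots\subseteq C_n$ of order $\ell$.

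Next, assume \ref{item:ext1} fails, so that no vertex of $F$ realizes two of (i)--(iii) for both pairs simultaneously. Corollary~\ref{cor:one} then forces, for every $v\in F$, that \emph{exactly one} of $\{G\setminus v,\,G*v\setminus v,\,G/v\}$ preserves both $\kappa(Q,R)$ and $\kappa(S,T)$. This partitions $F$ into three classes according to which single operation works. Combining this classification with the information embedded in the $A$-, $B$-, and $C$-chains (the chains themselves are built from which operation breaks $(Q,R)$- or $(S,T)$-connectivity at each vertex), a pigeonhole argument lets us choose an initial segment $V_Q=\{f_1,\ldots,f_j\}$ of the $A$-ordering and an initial segment $V_R=\{g_1,\ldots,g_{j'}\}$ of the $B$-ordering that are disjoint and together cover at most $\lceil n/2\rceil$ vertices. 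Setting $Q'=A_j=Q\cup V_Q$ and $R'=B_{j'}=R\cup V_R$ immediately gives $\rho_G(Q')=\rho_G(R')=k$ and $|V(G)-(Q'\cup R')|=n-|V_Q|-|V_R|\geq\lfloor n/2\rfloor$, yielding \ref{item:1} and \ref{item:3}.

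It remains to establish \ref{item:2}, namely $\rho_G(Q'\cup R')\leq\rho_G(Q\cup R)-1$. Writing $F'=V(G)-(Q'\cup R')\subsetneq F$, the self-duality $\rho_G(X)=\rho_G(V(G)-X)$ turns this into the strict inequality $\rho_G(F')\leq\rho_G(F)-1$. The plan is to exhibit some $v\in V_Q\cup V_R$ whose removal from the complement strictly drops the rank. Concretely, for a vertex $v$ placed into $V_Q$ because its ``single good operation'' was (say) deletion, the two failing operations witness, via the proof idea of Lemma~\ref{lem:seqsep} and Lemma~\ref{lem:qset}, that $\rho_G((Q\cup V_Q-\{v\})\cup\{v\})=\rho_G(Q\cup V_Q)$ while the analogous failure for $(S,T)$ forces a compatible separating set through $C_i$ to absorb the rank contribution of $v$ on the $R$-side. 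Submodularity via Lemma~\ref{lem:subeq} (and the helper estimates in Lemma~\ref{lem:subtool}) then converts the two side-conditions into $\rho_G(F'\cup\{v\})=\rho_G(F')+0$ but $\rho_G(F)>\rho_G(F\setminus\{v\})$, producing the rank drop.

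The main obstacle is precisely this final step: converting the classification of $F$ (from the failure of \ref{item:ext1}) and the three chains into a single clean submodular computation that forces the strict decrease $\rho_G(F')<\rho_G(F)$. The bookkeeping for how the $(S,T)$-chain $C_i$ interacts with both $A_j$ and $B_{j'}$ at the chosen pivot vertex $v$ is delicate, and one must take care that the pigeonhole choice of $V_Q$ and $V_R$ is compatible with Lemma~\ref{lem:qset}'s rank-raising conclusion rather than accidentally picking a vertex on the wrong side of the classification.
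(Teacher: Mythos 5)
Your proposal stalls exactly where the real work lies: the increment $\tilde\sqcap_{G}[Q',R']\geq\tilde\sqcap_{G}[Q,R]+\frac12$ is asserted via a vague appeal to ``a compatible separating set through $C_i$'' and submodularity, and you yourself flag that this step is not carried out. The paper closes this gap with a specific mechanism that your outline never isolates. Apply Lemma~\ref{lem:seqsep} to $(S,T)$ only to extract its \emph{first} vertex $g\in F$, i.e.\ a vertex admitting an $(S,T)$-separating set $C$ of order $\ell$ with $C-(Q\cup R)=\{g\}$. By Theorem~\ref{thm:oum_linking}, two of $G\setminus g$, $G*g\setminus g$, $G/g$ preserve $\kappa(S,T)$; since (1) fails at $g$, one of those two, after local complementation say $G\setminus g$, must drop $\kappa(Q,R)$. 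The set $C$ is then used through \ref{item:s1} of Lemma~\ref{lem:subtool} to get $\rho_{G\setminus g}(Q\cup R)=\rho_{G}(Q\cup R\cup\{g\})$ (this is where $C\cap F=\{g\}$ and $\kappa_{G\setminus g}(S,T)=\ell$ are essential), and Lemma~\ref{lem:qset} (using $\rho_G(Q)=k$ and $\kappa_{G\setminus g}(Q,R)<k$) converts this into: either $\rho_{G}(Q\cup R)>\rho_{G}(Q\cup R\cup\{g\})$ or $\rho_{G}(Q\cup\{g\})=\rho_G(Q)+1$, giving $\tilde\sqcap_{G}[Q\cup\{g\},R]\geq\tilde\sqcap_{G}[Q,R]+\frac12$, and symmetrically for $R\cup\{g\}$. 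Without identifying this vertex $g$, invoking Theorem~\ref{thm:oum_linking} at it, and proving the equality $\rho_{G\setminus g}(Q\cup R)=\rho_{G}(Q\cup R\cup\{g\})$, there is no route from ``(1) fails'' to the strict drop $\rho_G(Q'\cup R')\leq\rho_G(Q\cup R)-1$; your classification of $F$ into three classes by the unique surviving operation does not by itself produce any rank inequality.

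A second, structural problem: you grow both sides at once ($Q'=A_j$, $R'=B_{j'}$) and rely on an unspecified pigeonhole to make the two initial segments disjoint, small, and ``compatible with Lemma~\ref{lem:qset}''. This is both unjustified (the two orderings of $F$ are unrelated, and nothing guarantees the pivot vertex lands in a usable initial segment) and unnecessary. The paper uses a single $(Q,R)$-chain $A_1\subseteq\cdots\subseteq A_n$ with $A_i=Q\cup\{f_1,\dots,f_i\}$, locates $g=f_i$, and extends only one side: $(Q',R')=(A_i,R)$ if $i\leq\lfloor n/2\rfloor$, and $(Q',R')=(Q,V(G)-A_{i-1})$ otherwise; symmetry of the cut-rank gives $\rho_G(V(G)-A_{i-1})=k$, the containment $Q\cup\{g\}\subseteq A_i$ (resp.\ $R\cup\{g\}\subseteq V(G)-A_{i-1}$) lets Lemma~\ref{lem:conn} transport the $+\frac12$ increment to $(Q',R')$, and the size bound $\lfloor\frac12|F|\rfloor$ falls out of the case split automatically. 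I recommend replacing your two-chain pigeonhole scheme with this one-sided construction; as written, your argument for items (i) and (iii) of (2) is fine only after that replacement, and item (ii) is missing entirely.
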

\begin{proof}
Assume that \ref{item:ext1} does not hold.
Let $n=|F|$. Since no vertex of $F$ is $(Q,R)$-flexible, by Lemma~\ref{lem:seqsep}, there exists an ordering $f_{1},\ldots,f_{n}$ of vertices of $F$ such that $Q\cup\{f_{1},\ldots,f_{i}\}$ is a $(Q,R)$-seperating set of order $k$ in $G$ for each $i\in\{1,\ldots,n\}$. Let $A_{i}= Q\cup\{f_{1},\ldots,f_{i}\}$ for each $1\leq i\leq n$.

No vertex of $F$ is $(S,T)$-flexible and so, by Lemma~\ref{lem:seqsep}, there exist a vertex $g$ in $F$ and an $(S,T)$-seperating set $C$ of order $\ell$ in $G$ such that $C-(Q\cup R)=\{g\}$.

By Theorem~\ref{thm:oum_linking}, there are graphs $G_{1}', G_{2}'\in\{G\setminus g, G*g\setminus g, G/g\}$ such that $\kappa_{G_{i}'}(S,T)=\kappa_{G}(S,T)$ for $i\in\{1,2\}$. Since \ref{item:ext1} does not hold, there exists $G'\in\{G_{1}',G_{2}'\}$ such that $\kappa_{G'}(Q,R)<\kappa_{G}(Q,R)$. Then by Lemma~\ref{lem:local}, we may assume that $G'=G\setminus g$. 

Since $\kappa_{G\setminus g}(S,T)=\ell$ and $S\subseteq C-\{g\}\subseteq V(G\setminus g)-T$, we have $\ell\leq\rho_{G\setminus g}(C-\{g\})\leq\rho_{G}(C)=\ell$ and therefore 
$\rho_{G\setminus g}(C-\{g\})=\rho_{G}(C)$. Since $C-\{g\}\subseteq Q\cup R$, by \ref{item:s1} of Lemma~\ref{lem:subtool},
\begin{align*}
\rho_{G\setminus g}(Q\cup R)+\rho_{G}(C)&\geq\rho_{G\setminus g}((Q\cup R)\cap C)+\rho_{G}((Q\cup R)\cup C) \\
&=\rho_{G\setminus g}(C-\{g\})+\rho_{G}(Q\cup R\cup\{g\}).
\end{align*}
Hence $\rho_{G}(Q\cup R\cup\{g\})\leq\rho_{G\setminus g}(Q\cup R)$ because $\rho_{G\setminus g}(C-\{g\})=\rho_{G}(C)$. By Lemma~\ref{lem:delrank}, $\rho_{G\setminus g}(Q\cup R)\leq\rho_{G}(Q\cup R\cup\{g\})$ and therefore 
$\rho_{G\setminus g}(Q\cup R)=\rho_{G}(Q\cup R\cup\{g\})$.

Now we claim that $\tilde\sqcap_{G}(Q\cup\{g\},R)\geq\tilde\sqcap_{G}(Q,R)+\frac{1}{2}$. Observe that it is equivalent to show that
\[
\rho_{G}(Q\cup\{g\})+\rho_{G}(R)-\rho_{G}(Q\cup R\cup\{g\})\geq\rho_{G}(Q)+\rho_{G}(R)-\rho_{G}(Q\cup R)+1.
\]
We have $\rho_{G}(Q\cup R)\geq\rho_{G\setminus g}(Q\cup R)=\rho_{G}(Q\cup R\cup\{g\})$ 
and, by \ref{item:q1} of Lemma~\ref{lem:qset}, $\rho_{G}(Q\cup\{g\})\geq\rho_{G}(Q)$.
Therefore, it is enough to prove that $\rho_{G}(Q\cup R)\geq\rho_{G}(Q\cup R\cup\{g\})+1$ or $\rho_{G}(Q\cup\{g\})\geq\rho_{G}(Q)+1$.
Suppose that $\rho_{G}(Q\cup R)=\rho_{G}(Q\cup R\cup\{g\})=\rho_{G\setminus g}(Q\cup R)$.
Then, by \ref{item:s2} of Lemma~\ref{lem:subtool}, we have
\[
\rho_{G\setminus g}(Q)+\rho_{G}(Q\cup R)\geq\rho_{G\setminus g}(Q\cup R)+\rho_{G}(Q).
\]
So $\rho_{G\setminus g}(Q)\geq\rho_{G}(Q)$ and we have $\rho_{G\setminus g}(Q)=\rho_{G}(Q)$ by Lemma~\ref{lem:delrank}. Then by \ref{item:q2} of Lemma~\ref{lem:qset}, $\rho_{G}(Q\cup\{g\})=\rho_{G}(Q)+1$, proving the claim.

Similarly, we have $\tilde\sqcap_{G}(Q,R\cup\{g\})\geq\tilde\sqcap_{G}(Q,R)+\frac{1}{2}$. Let $i$ be an integer such that $f_{i}=g$ and let
\[
(Q',R')=
\begin{cases}
(A_{i},R) & \text{if $i\leq \lfloor\frac{n}{2}\rfloor$,} \\
(Q,V(G)-A_{i-1}) & \text{otherwise.}
\end{cases}
\]
Then by Lemma~\ref{lem:conn}, \[\tilde\sqcap_{G}(Q',R')\geq\min\left(\tilde\sqcap_{G}(Q\cup\{g\},R),\tilde\sqcap_{G}(Q,R\cup\{g\})\right)\geq\tilde\sqcap_{G}(Q,R)+\frac{1}{2}.\] So \ref{item:2} holds and \ref{item:1} and \ref{item:3} hold by the construction.
\end{proof}

Now we are ready to prove Theorem~\ref{thm:main} when $S$ and $T$ are small.

\begin{proposition}
\label{prop:ST_small}
Let $G$ be a graph and $Q$, $R$, $S$, and $T$ be subsets of $V(G)$ such that $Q\cap R=S\cap T=\emptyset$ and $F=V(G)-(Q\cup R\cup S\cup T)$. Let $k=\kappa_{G}(Q,R)$ and $\ell=\kappa_{G}(S,T)$. If $|S|=|T|=\ell$ and $|F|\geq (2\ell+1)2^{2k}$, then there is a vertex $v\in F$ such that at least two of the following hold:
\begin{enumerate}[label=\rm(\arabic*)]
\item $\kappa_{G\setminus v}(Q,R)=k$ and $\kappa_{G\setminus v}(S,T)=\ell$.
\item $\kappa_{G*v\setminus v}(Q,R)=k$ and $\kappa_{G*v\setminus v}(S,T)=\ell$.
\item $\kappa_{G\wedge uv\setminus v}(Q,R)=k$ and $\kappa_{G\wedge uv\setminus v}(S,T)=\ell$ for every neighbor $u$ of $v$.
\end{enumerate}
\end{proposition}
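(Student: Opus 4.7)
The plan is to argue by contradiction: suppose no vertex of $F$ satisfies two of the three conditions, and then iterate Lemma~\ref{lem:nesting} until a contradiction is forced. A first easy observation is that under the negated hypothesis, no vertex $v$ of $F$ is either $(Q,R)$-flexible or $(S,T)$-flexible. Indeed, if $v$ were $(Q,R)$-flexible, then all three graphs $G\setminus v$, $G*v\setminus v$, $G\wedge uv\setminus v$ would already preserve $\kappa(Q,R)=k$; combined with Theorem~\ref{thm:oum_linking} applied to $(S,T)$, which provides at least two operations preserving $\kappa(S,T)=\ell$, we would get two simultaneous preservations, contradicting the negation. A symmetric argument rules out $(S,T)$-flexibility.

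Next, I would normalize the sizes of $Q$ and $R$. By Lemma~\ref{lem:base}, there exist $Q_1\subseteq Q$ and $R_1\subseteq R$ with $|Q_1|=|R_1|=\kappa_G(Q_1,R_1)=k$; since any vertex-minor $H$ satisfies $\kappa_H(Q_1,R_1)\le\kappa_H(Q,R)\le\kappa_G(Q,R)=k$, the equality $\kappa_H(Q_1,R_1)=k$ is equivalent to $\kappa_H(Q,R)=k$. Hence we may assume $|Q|=|R|=k$ and so $\rho_G(Q)=\rho_G(R)=k$; the hypothesis $|S|=|T|=\ell$ likewise gives $\rho_G(S)=\rho_G(T)=\ell$.

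The core of the proof is an iterative application of Lemma~\ref{lem:nesting}. For this I first need an initial pair $(Q_0,R_0)$ with $Q\subseteq Q_0$, $R\subseteq R_0$, $Q_0\cap R_0=\emptyset$, $\rho_G(Q_0)=\rho_G(R_0)=k$, $S\cup T\subseteq Q_0\cup R_0$, and $|V(G)-(Q_0\cup R_0)|\ge 2^{2k}$. The delicate part is absorbing the (at most $2\ell$) vertices of $(S\cup T)\setminus(Q\cup R)$ into $Q\cup R$ while keeping $\rho=k$ on both sides; this can be done using Lemma~\ref{lem:seqsep} to produce nested $(Q,R)$-separating sets of order $k$ and assigning each vertex of $S\cup T$ to one of the two sides according to the side of the nested sequence it ends up on. A pigeonhole across the at most $2\ell+1$ ``gaps'' determined by these absorbed vertices is what accounts for the factor $(2\ell+1)$ in the hypothesis: it guarantees that after this absorption the remaining set $F_0:=V(G)-(Q_0\cup R_0)$ still has at least $2^{2k}$ vertices.

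With $(Q_0,R_0)$ in hand, I would apply Lemma~\ref{lem:nesting} repeatedly. Conclusion~\ref{item:ext1} of the lemma would directly produce a vertex satisfying two of the three required conditions, contradicting our negation; so each iteration must deliver conclusion~\ref{item:ext2}, yielding $(Q_{j+1},R_{j+1})$ with $\tilde\sqcap_G[Q_{j+1},R_{j+1}]\ge\tilde\sqcap_G[Q_j,R_j]+\tfrac12$ and $|F_{j+1}|\ge\lfloor|F_j|/2\rfloor$. The non-flexibility hypothesis of Lemma~\ref{lem:nesting} is preserved along the iteration: since $S\cup T\subseteq Q_0\cup R_0\subseteq Q_j\cup R_j$ we have $F_j\subseteq F$, so $(S,T)$-flexibility in $F_j$ still contradicts the negation; and since $Q\subseteq Q_j$, $R\subseteq R_j$, any $(Q_j,R_j)$-flexible vertex is $(Q,R)$-flexible by the monotonicity $\kappa_H(Q,R)\ge\kappa_H(Q_j,R_j)$. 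Since $\tilde\sqcap_G[Q_j,R_j]\le\tfrac12(\rho_G(Q_j)+\rho_G(R_j))=k$, at most $2k$ iterations are possible, and after them $\rho_G(Q_{2k}\cup R_{2k})=0$; then any remaining vertex of $F_{2k}$ (and one exists because $|F_0|\ge 2^{2k}$) has no neighbor in $Q_{2k}\cup R_{2k}$, hence is $(Q_{2k},R_{2k})$-flexible and therefore $(Q,R)$-flexible, the final contradiction. I expect the main obstacle to be the setup step, namely the construction of $(Q_0,R_0)$ engulfing $S\cup T$ while preserving $\rho=k$ on both sides and without shrinking $F$ by more than a factor of $2\ell+1$.
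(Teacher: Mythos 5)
Your outline is essentially the paper's: rule out $(Q,R)$- and $(S,T)$-flexible vertices of $F$, use Lemma~\ref{lem:seqsep} plus a pigeonhole over $2\ell+1$ gaps to trap $S\cup T$ inside a pair of cut-rank-$k$ sets leaving at least $2^{2k}$ free vertices, then iterate Lemma~\ref{lem:nesting}, gaining $\frac{1}{2}$ in $\tilde\sqcap_{G}$ while at most halving the free set, until the bound $\tilde\sqcap_{G}\leq k$ forces a contradiction. The first genuine gap is the step ``we may assume $|Q|=|R|=k$.'' Inside this proposition you cannot shrink $Q$ and $R$: the conclusion demands a vertex of the \emph{original} $F=V(G)-(Q\cup R\cup S\cup T)$, and once $(Q,R)$ is replaced by $(Q_{1},R_{1})$ the vertices of $(Q\cup R)-(Q_{1}\cup R_{1})$ become free. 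The nested sets from Lemma~\ref{lem:seqsep} for $(Q_{1},R_{1})$ need not contain $Q$ nor avoid $R$, so exactly these vertices break your later claims: $F_{j}\subseteq F$ fails, $S\cup T\subseteq Q_{0}\cup R_{0}$ can fail when $S\cup T$ meets $(Q\cup R)-(Q_{1}\cup R_{1})$, the hypothesis of Lemma~\ref{lem:nesting} that \emph{no} vertex of $V(G)-(Q_{j}\cup R_{j})$ is flexible is unverified for them, and outcome~(1) of that lemma could return a vertex outside $F$ (possibly inside $Q\cup R$, where the proposition's conditions do not even parse). The normalization is also unnecessary: the paper never assumes $\rho_{G}(Q)=\rho_{G}(R)=k$, because the initial pair is taken among the order-$k$ separating sets $A_{i}$ and their complements, which is all Lemma~\ref{lem:nesting} needs. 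A reduction of the kind you want is legitimate only at the level of Theorem~\ref{thm:main}, where the discarded vertices are actually deleted via Corollary~\ref{cor:One} and the conclusion is lifted back with Lemma~\ref{lem:perm}; that is how the paper shrinks $S$ and $T$, and it does not shrink $Q$ and $R$ at all.

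The second gap is the flexibility propagation. You claim a $(Q_{j},R_{j})$-flexible vertex is $(Q,R)$-flexible ``by the monotonicity $\kappa_{H}(Q,R)\geq\kappa_{H}(Q_{j},R_{j})$,'' but with $Q\subseteq Q_{j}$ and $R\subseteq R_{j}$ the inequality goes the other way, and preserving the connectivity of a larger pair does not in general preserve that of a smaller pair. The statement you actually need (non-$(Q,R)$-flexible implies non-$(Q_{j},R_{j})$-flexible) is precisely Lemma~\ref{lem:nonflex}, applied once per side together with Lemma~\ref{lem:local} to cover the operations other than deletion, and it is exactly here that the maintained condition $\rho_{G}(Q_{j})=\rho_{G}(R_{j})=k$ is used; this is the step where the paper invokes Lemmas~\ref{lem:local} and~\ref{lem:nonflex} rather than monotonicity. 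The same repair is needed in your endgame (``\ldots hence is $(Q_{2k},R_{2k})$-flexible and therefore $(Q,R)$-flexible''). That endgame is otherwise a workable variant: after $2k$ increments one indeed gets $\rho_{G}(Q_{2k}\cup R_{2k})=0$ and a leftover vertex with no neighbor across the cut, though you still owe a short argument that such a vertex is flexible; the paper instead runs one more iteration and contradicts $\tilde\sqcap_{G}[Q_{2k+2},R_{2k+2}]\leq\rho_{G}(Q_{2k+2})=k$ directly via Lemma~\ref{lem:conn}, which is shorter.
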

\begin{proof}
If $F$ has a vertex which is $(S,T)$-flexible or $(Q,R)$-flexible, then our conclusion follows by Theorem~\ref{thm:oum_linking}. So we can assume that no vertex of $F$ is $(S,T)$-flexible or $(Q,R)$-flexible. Let $n=|F|$.

By Lemma~\ref{lem:seqsep}, there exist an ordering $f_{1},\ldots,f_{n}$ of vertices of $F$ and a sequence $A_{1},\ldots,A_{n}$ of $(Q,R)$-seperating sets of order $k$ in $G$ satisfying the following:
\begin{itemize}
\item $A_{i}\subseteq A_{i+1}$ for each $1\leq i\leq n-1$.
\item $A_{i}\cap F=\{f_{1},\ldots,f_{i}\}$ for each $1\leq i\leq n$.
\end{itemize}
For each $1\leq i\leq n$, let $B_{i}=V(G)-A_{i}$. Let $q=2^{2k}$ and $A_{0}=Q$. For $1\leq i\leq 2\ell+1$, let $X_{i}= A_{iq}-A_{(i-1)q}$. Since $|S|=|T|=\ell$, there exists $1\leq m\leq 2\ell+1$ such that $X_{m}\cap(S\cup T)=\emptyset$. 
Let $j=(m-1)q$. Then we have $Q\cup R\cup S\cup T\subseteq A_{j}\cup B_{j+q}$. 

Assume that our conclusion fails and so every vertex of $F$ satisfies at most one of (1), (2), and (3).
We claim that, for each $1\leq i\leq 2k+2$, there exist disjoint subsets $Q_{i}$ and $R_{i}$ of $V(G)$ satisfying the following.
\begin{enumerate}[label=\rm(\roman*)]
\item\label{item:_1} $Q \subseteq Q_{i}$, $R \subseteq R_{i}$, and $\rho_{G}(Q_{i})=\rho_{G}(R_{i})=k$.
\item\label{item:_2} $\tilde\sqcap_{G}[Q_{i},R_{i}]\geq\frac{i-1}{2}$.
\item\label{item:_3} $\left|V(G)-(Q_{i}\cup R_{i})\right|\geq \lfloor2^{2k+1-i}\rfloor$.
\end{enumerate}
We proceed by the induction on $i$. Let $Q_{1}=A_{j}$, $R_{1}=B_{j+q}$, and $F_{1}=V(G)-(Q_{1}\cup R_{1})$. Then $|F_{1}|=2^{2k}$ and so $(Q_{1},R_{1})$ satisfies the claim. Therefore we may assume that $i\geq 2$. By the induction hypothesis, there exist disjoint subsets $Q_{i-1}$ and $R_{i-1}$ of $V(G)$ satisfying~\ref{item:_1}, \ref{item:_2}, and~\ref{item:_3} for $i-1$.
By Lemmas~\ref{lem:local} and \ref{lem:nonflex}, no vertex of $V(G)-(Q_{i-1}\cup R_{i-1})$ is $(Q_{i-1},R_{i-1})$-flexible. If there is a vertex $v$ of $V(G)-(Q_{i-1}\cup R_{i-1})$ satisfying~\ref{item:ext1} of Lemma~\ref{lem:nesting} for two pairs $(Q_{i-1}, R_{i-1})$ and $(S, T)$, 
then by Lemmas~\ref{lem:local} and~\ref{lem:nonflex}, $v$ satisfies at least two of (1), (2), and (3), contradicting our assumption. So we may assume that $V(G)-(Q_{i-1}\cup R_{i-1})$ has no such vertex.
Hence, by Lemma~\ref{lem:nesting}, there exist disjoint subsets $Q_{i}$ and $R_{i}$ of $V(G)$ such that the following hold:
\begin{enumerate}[label=\rm(\alph*)]
\item $Q_{i-1}\subseteq Q_{i}$, $R_{i-1}\subseteq R_{i}$ and $\rho_{G}(Q_{i})=\rho_{G}(R_{i})=k$.
\item $\tilde\sqcap_{G}[Q_{i},R_{i}]\geq\tilde\sqcap_{G}[Q_{i-1},R_{i-1}]+\frac{1}{2}\geq\frac{i-2}{2}+\frac{1}{2}=\frac{i-1}{2}$.
\item $\left|V(G)-(Q_{i}\cup R_{i})\right|\geq\lfloor\frac{1}{2}\left|V(G)-(Q_{i-1}\cup R_{i-1})\right|\rfloor\geq\lfloor\frac{1}{2}\cdot 2^{2k+2-i}\rfloor=\lfloor 2^{2k+1-i}\rfloor$.
\end{enumerate}
This proves our claim.
Then by (ii) and Lemma~\ref{lem:conn}, $k+\frac{1}{2}\leq\tilde\sqcap_{G}(Q_{2k+2},R_{2k+2})\leq\tilde\sqcap_{G}(Q_{2k+2},V(G)-Q_{2k+2})=\rho_{G}(Q_{2k+2})=k$, which is a contradiction. Therefore our conclusion holds.
\end{proof}

Now we are ready to complete the proof of Theorem~\ref{thm:main}.

\begin{proof}[Proof of Theorem~\ref{thm:main}]
By Lemma~\ref{lem:base}, there exist $S_{1}\subseteq S$ and $T_{1}\subseteq T$ such that $|S_{1}|=|T_{1}|=\kappa_{G}(S_{1},T_{1})=\kappa_{G}(S,T)$. Let $X=(S\cup T)-(Q\cup R\cup S_{1}\cup T_{1})$. 
By Corollary~\ref{cor:One}, there is a vertex-minor $H$ of $G$ such that $V(H)=V(G)-X$, $\kappa_{H}(Q,R)=k$, and $\kappa_{H}(S_{1},T_{1})=\ell$. 

For a vertex $v$ of $V(H)-(Q\cup R\cup S_{1}\cup T_{1})$, let $H_{1}^v=H\setminus v$, $H_{2}^v=H*v\setminus v$, and $H_{3}^v=H/v$ and let $G_{1}^v=G\setminus v$, $G_{2}^v=G*v\setminus v$, and $G_{3}^v=G/v$. Then by Lemma~\ref{lem:perm}, there exists a permutation $\sigma_v:\{1,2,3\}\rightarrow\{1,2,3\}$ such that $H_{i}^v$ is a vertex-minor of $G_{\sigma(i)}^{v}$ for each $i\in\{1,2,3\}$.
By Lemma~\ref{lem:kmonotone}, $\kappa_{H_{i}^v}(S_{1},T_{1})\leq\kappa_{G_{\sigma(i)}^{v}}(S_{1},T_{1})\leq\kappa_{G_{\sigma(i)}^{v}}(S,T)\leq\kappa_{G}(S,T)=\ell$ and $\kappa_{H_{i}^v}(Q,R)\leq\kappa_{G_{\sigma(i)}^{v}}(Q,R)\leq\kappa_{G}(Q,R)=k$ for each $i\in\{1,2,3\}$.

Since $|V(H)-(Q\cup R\cup S_{1}\cup T_{1})|=|F|\geq(2\ell+1)2^{2k}$, by Proposition~\ref{prop:ST_small}, there exist a vertex~$v$ of $V(H)-(Q\cup R\cup S_{1}\cup T_{1})=F$ and $i,j\in\{1,2,3\}$ such that $i\neq j$ and
$\kappa_{H_{i}^v}(Q,R)=\kappa_{H_{j}^v}(Q,R)=k$ and 
$\kappa_{H_{i}^v}(S_{1},T_{1})=\kappa_{H_{j}^v}(S_{1},T_{1})=\ell$. Therefore,  
$\kappa_{G_{\sigma(i)}^{v}}(S,T)=\kappa_{G_{\sigma(j)}^{v}}(S,T)=\ell$ and
$\kappa_{G_{\sigma(i)}^{v}}(Q,R)=\kappa_{G_{\sigma(j)}^{v}}(Q,R)=k$. 
\end{proof}

\paragraph{Acknowledgements.}
The authors would like to thank the anonymous reviewers for 
their careful reviews and suggestions.

\providecommand{\bysame}{\leavevmode\hbox to3em{\hrulefill}\thinspace}
\providecommand{\MR}{\relax\ifhmode\unskip\space\fi MR }
\providecommand{\MRhref}[2]{%
  \href{http://www.ams.org/mathscinet-getitem?mr=#1}{#2}
}
\providecommand{\href}[2]{#2}

\end{document}